\documentclass[12pt,reqno]{amsart}
\textwidth=15cm \textheight=23cm
\oddsidemargin=0.5cm \evensidemargin=0.5cm
\topmargin=0cm



\usepackage{amsmath}
\usepackage{amsthm}
\usepackage{amssymb}
\usepackage{graphics}
\usepackage{latexsym}

\numberwithin{equation}{section}
\newtheorem{thm}{Theorem}[section]
\newtheorem{prop}[thm]{Proposition}
\newtheorem{lem}[thm]{Lemma}

\theoremstyle{remark}

\newcommand{\p}{\partial}



\title[W. P for the periodic Kawahara equation]{
}

\author[T. K. Kato]{
}
\email[Takamori Kato]{tk-kato@math.kyoto-u.ac.jp}

\subjclass[2000]{35Q55}
\keywords{Kawahara equation, well-posedness, Cauchy problem, Fourier restriction norm, I-mehod, low regularity}

\begin{document}

\begin{center}
{\bf LOW REGULARITY WELL-POSEDNESS FOR THE PERIODIC KAWAHARA EQUATION}

\bigskip {\sc Takamori Kato}

\smallskip {\small Department of Mathematics, Kyoto University 

Kyoto, 606-8502, Japan}

\end{center}
\begin{abstract}
In this paper, we consider the well-posedness for the Cauchy problem of the Kawahara equation with low regularity data in the periodic case. 
We obtain the local well-posedness for $s \geq -3/2$ by a variant of Fourier restriction norm method introduced by Bourgain. 
Moreover, these local solutions can be extended globally in time for $s \geq -1$ by the I-method. 
On the other hand, we prove ill-posedness for $s<-3/2$ in some sense. 
This is a shape contrast to the results in the case of $\mathbb{R}$, where 
the critical exponent is equal to $-2$.  
\end{abstract}
\maketitle

\section{Introduction}

We consider the well-posedness for the Cauchy problem of the Kawahara equation which is one of the fifth order KdV type equations. 
\begin{align} \label{KT_1}
\begin{cases}
& \p_t u+ \alpha \p_{x}^5 u+\beta \p_x^3 u + \gamma \p_x ( u^2 )=0, \hspace{0.3cm} (t,x) \in [0,T] \times \mathbb{T}, \\
& u(0, x)=u_0(x), \hspace{0.3cm} x \in \mathbb{T},
\end{cases} 
\end{align}
where $\alpha, \beta, \gamma \in \mathbb{R}$ with $\alpha, \gamma \neq 0$ and 
$\mathbb{T}:= \mathbb{R}/ 2 \pi \mathbb{Z}$. 
Here the unknown function $u$ is assumed to 
be real valued or complex valued in the case we deal with 
the local well-posedness (LWP for short) and 
to be real valued when we consider the global well-posedness (GWP for short). 
By the renormalization of $u$, we may assume $\alpha=-1$, $\gamma=1$ and $\beta=-1,0$ or 
$1$. We put $v=u-a$ where $a$ is the integral mean value of initial data defined as $a:= \int_{\mathbb{T}} u_0 (x) dx$. 
If $u$ solves (\ref{KT_1}), then $v$ satisfies the following equation. 
\begin{align*} 
\begin{cases}
& \p_t v- \p_{x}^5 v+\beta \p_x^3 v + 2a \p_x v+ \p_x ( v^2 )=0, \hspace{0.3cm} (t,x) \in [0,T] \times \mathbb{T}, \\
& v(0, x)=v_0(x), \hspace{0.3cm} x \in \mathbb{T}.
\end{cases} 
\end{align*}
Note that the Fourier coefficient $\mathcal{F}_x (v) (0)$ of zero mode vanishes. 
It suffices to consider the well-posedness for (\ref{KT_1}) under the mean-zero  assumption 
$\int_{\mathbb{T}} u_0 (x) dx=0$ because the linear first order term is harmless. 
This observation was used by Bourgain \cite{Bo}. Without the mean-zero assumption, 
the data-to-solution map fails to be $C^2$ in $H^s(\mathbb{T})$ for any $s \in \mathbb{R}$. 
So this assumption is crucial for some of analysis that follows.  
From the above argument, we only consider the case $\dot{\mathbb{Z}} := \mathbb{Z} \setminus \{0\}$. 
The Kawahara equation models the capillary waves on a shallow layer and 
the magneto-sound propagation in plasma (see e.g. \cite{Ka}). 
This equation has solitary waves with $\beta=1$ and many conserved quantities. 
Our aim is to prove the well-posedness for (\ref{KT_1}) with low regularity data given in the Sobolev space $\dot{H}^s(\mathbb{T})$. 
Here $\dot{H}^s(\mathbb{T})$ is defined by the norm, 
\begin{align*}
\| u \|_{\dot{H}^s(\mathbb{T})} := 
\| \langle k \rangle^s \mathcal{F}_x u \|_{l_k^2( \dot{\mathbb{Z}} )},
\end{align*}
where $\langle \cdot \rangle:=(1+|\cdot |^2)^{1/2}$ . 
We first use the Fourier restriction norm method to prove LWP for (\ref{KT_1}). 
This method was introduced by Bourgain \cite{Bo}. Next, 
we extend local solutions to global-in-time ones by the I-method which 
was exploited by Colliander, Keel, Staffilani, Takaoka and Tao \cite{CoKe}, \cite{I02}. 
The Kawahara equation with the periodic boundary does not have the Kato smoothing effect 
unlike the case of $\mathbb{R}$, though a weak version of the Strichartz estimate still holds in the periodic setting. 
It is possible to make a close investigation into the resonance of nonlinear interactions under the periodic 
boundary conditions.

The local well-posedness for the periodic KdV equation has been extensively studied. 
Bourgain \cite{Bo} proved LWP in $\dot{H}^s$ for $s \geq 0$. 
Kenig, Ponce and Vega \cite{KPV96} refined Bourgain's argument to show LWP in $\dot{H}^s$ for $s>-1/2$. 
Moreover, Colliander, Keel, Staffilani, Takaoka and Tao \cite{CoKeSt} obtained LWP in the critical case $s=-1/2$. 
On the other hand, Christ, Colliander and Tao \cite{CCT} showed that  the data-to-solution map fails to be uniformly 
continuous for $-2< s <-1/2$. 
We now recall the local well-posedness results for the Kawahara equation. 
Hirayama \cite{Hi} proved LWP in $\dot{H}^s$ for $s \geq -1$ in the periodic case, 
which was an adaptation of the argument to Kenig, Ponce and Vega \cite{KPV96}. 
Moreover, there are many studies in the case of $\mathbb{R}$. 
Chen and Guo \cite{CG} proved for $s \geq -7/4$, using some modified Bourgain space $\bar{F}^s$ introduced in \cite{Gu}. 
Following an idea of Bejenaru  and Tao \cite{BT} and Kishimoto and Tsugawa \cite{KT}, 
we improved the previous results to $s \geq -2$ in \cite{TK_K}. This result is optimal in such a sense that 
the data-to-solution map fails to be continuous when $s<-2$. 
Earlier results can be found in \cite{CLMW}, \cite{CDT} and \cite{WCD}. 
The main difficulty in obtaining LWP for the periodic equation is to recover no derivatives by the smoothing effects.  
So we need to make a more complex modification of the Bourgain space. 
Then we find a suitable modification of function spaces to obtain the following theorem. 
\begin{thm} \label{LWP_TK}
Let $s \geq -3/2$. Then (\ref{KT_1}) is locally well-posed in $\dot{H}^{s}(\mathbb{T})$. 
\end{thm}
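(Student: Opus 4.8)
The plan is to implement the Fourier restriction norm method tuned to the fifth order dispersion. After the mean-zero reduction already carried out in the introduction, it suffices to solve $\partial_t v-\partial_x^5 v+\beta\partial_x^3 v+\partial_x(v^2)=0$ for $v$ with $\mathcal{F}_x v(t,0)\equiv 0$; the linear symbol is $\phi(k)=k^5+\beta k^3$, and I write $X^{s,b}$ for the space with norm $\|\langle k\rangle^s\langle\tau-\phi(k)\rangle^b\widetilde{u}\|_{L^2_\tau\ell^2_k(\dot{\mathbb{Z}})}$, where $\widetilde{u}$ denotes the spacetime Fourier transform. Passing to the Duhamel formulation with a time cutoff, local well-posedness on a short interval $[0,T]$ reduces, via the usual contraction argument, to the linear estimates (homogeneous and inhomogeneous, which are routine) together with a bilinear estimate
\begin{equation*}
\|\partial_x(uv)\|_{Z^s}\lesssim T^{\theta}\|u\|_{Z^s}\|v\|_{Z^s},\qquad \theta>0,
\end{equation*}
in a space $Z^s$ built on $X^{s,1/2}$; the small power of $T$ is recovered in the standard way and I suppress it below.

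The choice of $Z^s$ is the heart of the matter and is the ``complex modification of the Bourgain space'' referred to above. Plain $X^{s,1/2}$ does not suffice: already near $s=-1$ the bilinear estimate degenerates in the high$\times$high$\to$low frequency interaction, where the two inputs live at frequency $\sim N$ while the output lives at a frequency $1\le K\ll N$, so that the output weight $\langle K\rangle^s\sim 1$ supplies no smoothing with which to absorb the derivative $\partial_x$ together with the loss $\langle N\rangle^{-2s}$ coming from the two input weights. I will therefore take $Z^s$ to be $X^{s,1/2}$ intersected with an auxiliary norm that, first, restores the embedding into $C_t\dot{H}^s$ which fails at $b=1/2$ (a Bourgain-type $\ell^2_kL^1_\tau$ norm weighted by $\langle k\rangle^s\langle\tau-\phi(k)\rangle^{-1}$) and, second, supplies extra control in the low-frequency, high-modulation regime --- roughly, a modulation exponent larger than $1/2$ when $|k|$ is small --- so that a bilinear output landing at low frequency with large modulation is measured with more than the bare weight $\langle\tau-\phi(k)\rangle^{1/2}$. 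The calibration of this second ingredient, and the way it must interact with the first, is exactly what pins the threshold at $s=-3/2$, and it is the delicate part of the construction.

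The engine of the bilinear estimate is the resonance identity: for $k=k_1+k_2$ and $\tau=\tau_1+\tau_2$,
\begin{equation*}
\bigl(\tau-\phi(k)\bigr)-\bigl(\tau_1-\phi(k_1)\bigr)-\bigl(\tau_2-\phi(k_2)\bigr)=-\,k\,k_1\,k_2\bigl(5(k_1^2+k_1k_2+k_2^2)+3\beta\bigr).
\end{equation*}
Since $k,k_1,k_2\in\dot{\mathbb{Z}}$, the integer $k_1^2+k_1k_2+k_2^2=\tfrac12(k_1^2+k_2^2+k^2)$ is at least $3$, hence $|5(k_1^2+k_1k_2+k_2^2)+3\beta|\ge 12$; in particular there are no exact resonances, and the largest of the three modulations $\langle\tau-\phi(k)\rangle$, $\langle\tau_j-\phi(k_j)\rangle$ is $\gtrsim|k_{\min}|\,|k_{\mathrm{med}}|\,|k_{\max}|^3$. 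The scheme is then: decompose $u$ and $v$ dyadically in frequency and in modulation; in each block, deposit the derivative and the Sobolev losses on the largest modulation using this lower bound; estimate the two remaining factors by Cauchy--Schwarz together with the elementary count of the number of $k_1$ for which $\phi(k)-\phi(k_1)-\phi(k_2)$ lies in a fixed dyadic interval (the periodic substitute for the Strichartz and Kato smoothing estimates available on $\mathbb{R}$); and sum the resulting geometric series in the dyadic parameters.

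The main obstacle, and the configuration that forces the modification above, is precisely the high$\times$high$\to$low interaction with the output modulation dominant: there the frequency weights leave no room, a plain $X^{s,1/2}$ argument stops at $s=-1$, and reaching $s=-3/2$ requires simultaneously exploiting the full size $\sim|k\,k_1\,k_2|\,|k_{\max}|^2$ of the output modulation through the modified component of $Z^s$ and controlling the lattice-point count sharply, using that consecutive values of $\phi(k)-\phi(k_1)-\phi(k_2)$, as $k_1$ ranges over a dyadic block, are separated by about the size of its derivative in $k_1$. One must also be careful at the endpoint $s=-3/2$ itself, where several borderline cases produce logarithmic factors that have to be absorbed by the dyadic gains rather than estimated block by block. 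The remaining configurations --- high$\times$low$\to$high, high$\times$high$\to$high, and the fully comparable case --- inherit genuine smoothing from the frequency weights or from a larger power of the modulation, and are handled by the same modulation-counting scheme with room to spare.
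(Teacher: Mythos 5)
Your overall strategy (a modified Bourgain space, the resonance identity, and a divisor-counting substitute for the bilinear Strichartz estimate) matches the paper's in outline, but there are two concrete gaps. First, you close the contraction by claiming a factor $T^{\theta}$ in the bilinear estimate, ``recovered in the standard way.'' In the periodic setting at this threshold that is not standard: the usual device (lowering a modulation exponent below $1/2$) costs exactly the room that is already exhausted at $s=-3/2$, and the paper does not attempt it. Instead it rescales to the torus $\mathbb{T}_{\lambda}$, where $\| u_{0,\lambda}\|_{\dot H^s(\mathbb{T}_{\lambda})}\leq \lambda^{-7/2-s}\| u_0\|_{\dot H^s}$ can be made small, proves the bilinear estimate with a constant \emph{independent of} $\lambda$ (this uniformity is nontrivial and is what forces the low-frequency region $1/\lambda\leq |k|\leq 1$ into the definition of the space), solves on the unit time interval by smallness of the data, and scales back to obtain a lifetime $T\sim \lambda^{-5}\sim r^{-5/2}$. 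You need to replace your $T^{\theta}$ step by this (or an equivalent) mechanism; as written, the contraction does not close.

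Second, your $Z^s$ is underspecified exactly where the threshold $-3/2$ is decided, and the one modification you do propose (extra modulation weight only when $|k|$ is small, plus the $\ell_k^2 L_{\tau}^1$ norm) is not enough. In the high$\times$high$\to$low interaction the output at frequency $|k|\geq 1$ acquires modulation $\sim |k|N^4$, far above $|k|^5$ when $1\leq |k|\ll N$; when that output is fed back into the iteration as an input, the space must still control it. The paper handles this by a three-region splitting: $X^{s,3/4}$ (not $X^{s,1/2}$) on the near-characteristic region $D_1$, the norm $X^{-3s-1,s+1}$ on the intermediate region $|k|^4/10\leq |\tau-p_{\lambda}(k)|\leq |k|^5/10$ with $|k|\geq 1$ --- note the modulation exponent $s+1\leq 0$ is traded for the frequency weight $-3s-1\geq 2$ --- and $X^{-s/2-1,s/2+1}$ on the high-modulation low-frequency region, together with $Y^s$. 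Without a component of the second ($D_2$) type, the cases in which one \emph{input} carries the dominant modulation of size between $|k_1|^4$ and $|k_1|^5$ (the paper's $\Omega_{12}$, $\Omega_{22}$, $\Omega_{33}$) cannot be closed, and the scheme stops at $s=-1$, which is Hirayama's threshold for the unmodified estimate. So the proposal correctly identifies the dangerous interaction but does not supply the construction that resolves it, and that construction is the substance of the proof.
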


On the other hand, we obtain the ill-posedness result in the following sense.
\begin{thm} \label{ill_TK}
Let $s<-3/2$. Then, there is no $T>0$ such that the flow map, 
$\dot{H}^s(\mathbb{T}) \ni u_0 \mapsto u(t) \in \dot{H}^s(\mathbb{T})$, can be $C^3$ for 
any $t \in (0,T]$  
\end{thm}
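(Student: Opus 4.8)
The plan is to show that $C^3$ regularity of the flow map at the origin forces a trilinear estimate that fails when $s<-3/2$. First I would expand the solution map by Picard iteration around $u_0=0$: writing $u=\e u_0$ for small $\e$ and formally solving \eqref{KT_1}, the first nontrivial nonlinear contribution to $u(t)$ appears at order $\e^2$ (the quadratic term $\p_x(u^2)$), and the third Picard iterate $u_3(t)$ (the term at order $\e^3$) is given explicitly by a double Duhamel integral: schematically $u_3(t) = c\int_0^t S(t-t')\p_x\big(S(t')u_0 \cdot \int_0^{t'} S(t'-t'')\p_x(S(t'')u_0)^2\, dt''\big)dt''$, where $S(t)=e^{-t(\p_x^5 - \beta\p_x^3)}$ (with the renormalized signs) is the linear propagator. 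If the flow map is $C^3$ from $\dot H^s$ to $\dot H^s$ on $(0,T]$, then $\partial_\e^3|_{\e=0} u(t) = 3!\, u_3(t)$ must be bounded in $\dot H^s$ by $C\|u_0\|_{\dot H^s}^3$, uniformly for $t\in(0,T]$ (or at least for some fixed $t\in(0,T]$).

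The next step is to choose a one-parameter family of mean-zero data $u_0=u_0^{(N)}$ concentrated in frequency so as to make $u_3(t)$ as large as possible relative to $\|u_0\|_{\dot H^s}^3$. The natural choice, following the standard KdV-type counterexamples, is to take $\widehat{u_0}$ supported on a few large frequencies $\sim \pm N$ together with a frequency $\sim 1$ or a pair of nearly opposite frequencies, arranged so that the inner convolution produces a small output frequency while all the resonance functions (the phases $\phi(k)=\alpha k^5-\beta k^3$ appearing in the oscillatory integrals in $t'$ and $t''$) nearly cancel. On $\mathbb{T}$ the key arithmetic fact is that the five-linear-type resonance for this equation, $\phi(k_1)+\phi(k_2)+\phi(k_3)$ with $k_1+k_2+k_3=0$, factors and can vanish or be small for suitable integer choices — this is exactly the resonance structure the author alludes to in the introduction and is the reason the periodic critical exponent is $-3/2$ rather than $-2$. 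With such a resonant configuration the two time integrals contribute no decay (the integrand is essentially constant on $[0,t]$), the two spatial derivatives $\p_x$ each contribute a factor $\sim N$, and counting Sobolev weights gives $\|u_3(t)\|_{\dot H^s} \gtrsim N^{a}\|u_0\|_{\dot H^s}^3$ for some exponent $a=a(s)$ with $a(s)>0$ precisely when $s<-3/2$; letting $N\to\infty$ contradicts the uniform bound.

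The main obstacle is the precise bookkeeping in the second step: one must select the frequencies so that (i) the data is mean-zero (supported on $\dot{\mathbb Z}$), (ii) the inner quadratic interaction is genuinely resonant and outputs a controlled (small or moderate) frequency, (iii) the outer interaction with the remaining linear factor is also resonant so that neither time integration gains, and (iv) no unexpected cancellation kills the leading term of $\widehat{u_3}(t,k)$ for the output frequency $k$ of interest — this requires computing the relevant oscillatory $t'$- and $t''$-integrals exactly (they are elementary, of the form $\int_0^t e^{i\lambda \tau}d\tau$) and checking the resulting expression is nonzero, e.g. by evaluating at a specific small time $t$ or by a density/analyticity argument in $t$. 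Once the resonant frequency configuration is pinned down and the derivative and Sobolev-weight powers are tallied, the contradiction with $s<-3/2$ is immediate, and by the usual reduction the same family also rules out $C^3$ dependence for the original equation \eqref{KT_1} (the harmless linear term $2a\p_x v$ does not affect the mean-zero reduction).
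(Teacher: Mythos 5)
Your overall strategy is the same as the paper's (a Bourgain/Holmer-type argument: $C^3$ regularity of the flow at the origin forces the bound $\|A_3(u_0)(t)\|_{\dot H^s}\lesssim\|u_0\|_{\dot H^s}^3$ on the third Picard iterate, which one then defeats with frequency-concentrated data), but the step you defer as ``bookkeeping'' is the entire content of the theorem, and the resonance you gesture at is not the right one. You propose to exploit the vanishing of $p(k_1)+p(k_2)+p(k_3)$ under the constraint $k_1+k_2+k_3=0$. Under that constraint the output frequency is $0$, which is excluded by the mean-zero reduction; moreover, since $p(k_1+k_2+k_3)-p(k_1)-p(k_2)-p(k_3)=\tfrac52(k_1+k_2)(k_2+k_3)(k_3+k_1)\{k_1^2+k_2^2+k_3^2+(k_1+k_2+k_3)^2+\tfrac65\beta\}$, imposing $k_1+k_2+k_3=0$ turns the product $(k_1+k_2)(k_2+k_3)(k_3+k_1)$ into $-k_1k_2k_3$, which never vanishes on $\dot{\mathbb Z}^3$. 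So the configuration you describe is in fact nonresonant, and no blow-up would come from it.

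The counterexample the paper actually uses is simpler than what you sketch: $\widehat\phi_N=N^{-s}(\chi_N+\chi_{-N})$ (no auxiliary frequency $\sim 1$, and the inner convolution outputs the \emph{large} frequency $k_2+k_3=2N$, not a small one). The resonance is the pairwise cancellation $k_1+k_2=0$ in the configuration $k_1=-N$, $k_2=k_3=N$ with output $k=N$: this kills the phase $q_1\propto(k_1+k_2)(k_1+k_3)(k_2+k_3)\{\cdots\}$ in the double Duhamel integral, so the oscillatory factor $(1-e^{-iq_1t})/q_1$ degenerates to $it$. Tallying the weights ($\widehat u_0$'s contribute $N^{-3s}$, the derivatives contribute $(k_1+k_2+k_3)(k_2+k_3)\sim N^2$, and the inner denominator $q_0(k_2,k_3)\sim N^5$) gives $\|A_3(\phi_N)(t)\|_{\dot H^s}\gtrsim |t|\,N^{-2s-3}$ against $\|\phi_N\|_{\dot H^s}\sim1$, and one must also check that the nonresonant terms are of lower order ($O(N^{-3s-8})$ on the Fourier side) and do not cancel the leading term. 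The exponent $-2s-3>0$ is exactly $s<-3/2$; without identifying this configuration and carrying out this count, your proposal does not reach the stated threshold.
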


These theorems imply that the critical regularity is $s=-3/2$. 
Moreover, the local solutions 
obtained in Theorem~\ref{LWP_TK} are shown to exist on an arbitrary time by the I-method. 
Colliander, Keel, Staffilani, Takaoka and Tao \cite{CoKeSt} proved GWP for the periodic case KdV equation when $s>-1/2$,  
which was improved to $s \geq -1/2$ in \cite{CKSTT03}. 
We now describe the global well-posedness results for the Kawahara equation 
in the non-periodic case.   
Note that it is difficult to apply the I-method to the Kawahara equation because this equation has less symmetries than 
the KdV equation. Chen and Guo \cite{CG} overcame this issue and used the similar argument to \cite{CoKeSt} 
to show GWP for $s \geq -7./4$. Recently, we have refined their argument and established GWP for 
$s \geq -38/21$ in \cite{TK_G}.  
We apply the argument presented for the non-periodic case to the periodic setting so that the following is established. 
\begin{thm} \label{GWP_TK}
Let $s \geq -1$. Then (\ref{KT_1}) is globally well-posed in $\dot{H}^s (\mathbb{T})$.
\end{thm}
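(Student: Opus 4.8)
The plan is to run the I-method on top of the local theory of Theorem~\ref{LWP_TK}, using that for real-valued solutions the mean-zero $L^2$ norm is conserved: multiplying (\ref{KT_1}) by $u$ and integrating, the terms $\int u\,\p_x^5u$, $\int u\,\p_x^3 u$ and $\int u\,\p_x(u^2)$ all vanish, so $\|u(t)\|_{L^2(\mathbb{T})}=\|u_0\|_{L^2(\mathbb{T})}$. For $s\ge 0$ this already suffices: $s=0$ is immediate, and for $s>0$ the $\dot H^s$ norm propagates because, by the usual high--low structure of the bilinear estimate behind Theorem~\ref{LWP_TK}, the local existence time at regularity $s\ge 0$ depends only on the $L^2$ norm. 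So fix $-1\le s<0$ and, for $N\ge 1$, let $I=I_N$ be the even Fourier multiplier with symbol $m(k)=1$ for $|k|\le N$ and $m(k)=(|k|/N)^{s}$ for $|k|>N$. Since $s<0$ one has $\langle k\rangle^{s}\lesssim m(k)\le 1$, hence
\[
\|u\|_{\dot H^s(\mathbb{T})}\;\lesssim\;\|I_Nu\|_{L^2(\mathbb{T})}\;\lesssim\;N^{-s}\|u\|_{\dot H^s(\mathbb{T})},
\]
and it is enough to control $\|I_Nu(t)\|_{L^2}$ on an arbitrary interval $[0,T]$.

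First I would promote Theorem~\ref{LWP_TK} to an $I_N$-modified local statement: running its contraction argument at regularity $0$ with $I_N$ inserted everywhere, and using the bilinear estimate behind Theorem~\ref{LWP_TK} together with the fact that $m$ is slowly varying (so that $\|I_N\p_x(uv)\|$ is controlled by $\|I_Nu\|\,\|I_Nv\|$ in the relevant norms), one obtains a lifespan $\de\gtrsim(1+\|I_Nu_0\|_{L^2})^{-\th}$ and a bound $\|I_Nu\|_{Z^0([0,\de])}\lesssim\|I_Nu_0\|_{L^2}$, with $\th>0$ and the implicit constants independent of $N$; here $Z^0$ denotes the solution space of Theorem~\ref{LWP_TK} at regularity $s=0$. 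Crucially, since Theorem~\ref{LWP_TK} holds all the way down to $s=-3/2$, the bilinear and trilinear estimates at regularity $0$ have a genuine power of the largest frequency to spare.

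Next comes the almost conservation law. Since the linear part still contributes nothing,
\[
\frac{d}{dt}\,\|I_Nu(t)\|_{L^2}^2
=-2\int_{\mathbb{T}}(I_Nu)\,I_N\p_x(u^2)\,dx
=c\!\!\sum_{k_1+k_2+k_3=0}\!\! M(k_1,k_2,k_3)\,\widehat u(k_1)\widehat u(k_2)\widehat u(k_3),
\]
where, after symmetrization, $M(k_1,k_2,k_3)=k_1m(k_1)^2+k_2m(k_2)^2+k_3m(k_3)^2$. On the constraint $k_1+k_2+k_3=0$ one has $M\equiv 0$ when $m\equiv1$, so $M$ is supported where $\max_i|k_i|\gtrsim N$; there the two largest frequencies are comparable, and a Taylor expansion of the odd function $\x\mapsto\x m(\x)^2$ (a double mean value estimate) gives $|M(k_1,k_2,k_3)|\lesssim |k_{\min}|\,m(k_{\max})^2$, so that $M/(m(k_1)m(k_2)m(k_3))$ effectively places the derivative on the lowest frequency and is small on the support. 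Feeding this into the trilinear estimate underlying Theorem~\ref{LWP_TK}, the restriction $\max_i|k_i|\gtrsim N$ converts the spare power into a gain $N^{-\al}$ for some $\al>0$, and together with the $I_N$-modified local bound this yields
\[
\bigl|\,\|I_Nu(\de)\|_{L^2}^2-\|I_Nu_0\|_{L^2}^2\,\bigr|\;\lesssim\;N^{-\al}\,\|I_Nu_0\|_{L^2}^3 .
\]
If the gain obtained this way is short of what is needed to reach $s=-1$, I would add the first correction term $\Lambda_4$ to the modified energy, exactly as in the non-periodic treatment \cite{TK_G}, to upgrade $N^{-\al}$ to $N^{-2\al}$.

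Finally I would iterate. Given $T>0$ and $\|u_0\|_{\dot H^s}=R$, set $R_N:=1+N^{-s}R\gtrsim 1+\|I_Nu_0\|_{L^2}$; the steps have length $\de\gtrsim R_N^{-\th}$, and as long as $\|I_Nu\|_{L^2}\le 2\|I_Nu_0\|_{L^2}$ each step increases $\|I_Nu\|_{L^2}^2$ by at most $CN^{-\al}R_N^3$. Hence one can take $\sim N^{\al}R_N^{-1}$ such steps before the $L^2$ norm of $I_Nu$ doubles, covering a time $\gtrsim N^{\al}R_N^{-\th-1}$, which exceeds $T$ once $N^{\al}\gtrsim T\,R_N^{\th+1}\sim T\,N^{-s(\th+1)}R^{\th+1}$; since $s\ge-1$, one can arrange $\al+s(\th+1)>0$ (this is precisely where the bookkeeping forces the threshold $s=-1$), so such an $N=N(T,R)$ exists. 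Then $\|I_Nu(t)\|_{L^2}\lesssim\|I_Nu_0\|_{L^2}$ on $[0,T]$, hence $\|u(t)\|_{\dot H^s}$ is bounded there, and re-applying Theorem~\ref{LWP_TK} finitely many times extends the solution to $[0,T]$ together with uniqueness and continuous dependence. As $T$ is arbitrary, this is global well-posedness. The main obstacle is the almost conservation law — establishing the variant of the trilinear estimate of Theorem~\ref{LWP_TK} for the multiplier $M$, exploiting both its cancellation structure and the $3/2$-versus-$1$ room in the function spaces to extract a decay $N^{-\al}$ strong enough for $s=-1$ (and, if needed, introducing the correction term) — while re-running the delicate modified Bourgain-space local theory with $I_N$ inserted is the remaining technical point.
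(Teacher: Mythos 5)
Your overall scheme (an $I$-operator, a modified local theory, an almost conservation law, and an iteration) is the right family of ideas, but two of your key steps do not survive contact with the periodic setting, and they are exactly where the paper's work lies. First, you propose to run the local theory and the almost conservation law inside the modified spaces of Theorem~\ref{LWP_TK}, counting on ``a genuine power of the largest frequency to spare'' because LWP goes down to $s=-3/2$. The paper does the opposite: for the I-method it abandons $Z^s$ and works in Hirayama's standard space $W^s$ built from $X^{s,1/2}$ and $Y^s$, which is available only for $s\ge -1$, precisely because the periodic bilinear Strichartz estimate (Lemma~\ref{lem_L_4}) recovers no derivatives --- the paper states explicitly that this is why the I-method stops at $s=-1$ and that the result ``is optimal as long as we use the standard Bourgain space.'' The modification $Z^s$ redistributes weights between $\langle k\rangle$ and $\langle\tau-p_{\lambda}(k)\rangle$ on resonant regions; it does not create smoothing that an $I$-twisted $L^2$-level contraction could spend, so the mechanism you invoke for the decay $N^{-\al}$ is not there, and your explanation of the threshold $s=-1$ as iteration bookkeeping is also off.

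Second, the first-generation almost conservation law you write down --- the trilinear form with $M=k_1m(k_1)^2+k_2m(k_2)^2+k_3m(k_3)^2$ and the double mean value bound --- is carried out in the paper and only yields $s>-21/26$. Reaching $s=-1$ requires adding \emph{two} correction terms $\Lambda_3(\sigma_3)$ and $\Lambda_4(\sigma_4)$, so that the relevant commutator becomes the quintilinear form $\Lambda_5(M_5)$, and the whole argument then hinges on the sharp pointwise bound on $M_4$ of Lemma~\ref{lem_M_4} (due to Chen and Guo), which is delicate because the Kawahara symbol has fewer symmetries than the KdV one. Your ``if the gain is short, add the first correction term'' hedge neither commits to this structure nor identifies that bound, and one also needs the fixed-time difference estimate (Proposition~\ref{prop_FTD}) to relate $E_I^{(4)}$ back to $\|Iu\|_{L^2}^2$ at the endpoints of each step. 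Finally, the paper closes the iteration by rescaling to $\mathbb{T}_{\lambda}$ so that each local step has length one and the data are small, rather than by tracking a lifespan $\de\gtrsim(1+\|I_Nu_0\|_{L^2})^{-\th}$; that difference is cosmetic, but the two gaps above mean the proposal as written does not reach $s=-1$.
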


This result is optimal as long as we use the standard Bourgain space. 

\vspace{0.3em}

We now use the scaling argument. For $\lambda \geq 1$, 
\begin{align*}
u_{\lambda}(t,x):= \lambda^{-4} u(\lambda^{-5}t, \lambda^{-1} x), 
\hspace{0.3cm} u_{0,\lambda}(x):=\lambda^{-4} u_0(\lambda^{-1} x)
\end{align*}
If $u$ solves (\ref{KT_1}), $u_{\lambda}$ satisfies 
the following rescaled Cauchy problem;
\begin{align} \label{KT_2}
\begin{cases}
& \p_t u_{\lambda} -\p_x^5 u_{\lambda} 
+ \ \lambda^{-2} \beta \p_x^3 u_{\lambda} +  \p_x(u_{\lambda}^2 )=0, 
\hspace{0.3cm} (t,x)  \in [0, \lambda^5 T] \times \mathbb{T}_{\lambda}, \\
& u_{\lambda}(0,x)=u_{0,\lambda} (x), 
\hspace{0.3cm} x \in \mathbb{T}_{\lambda}, 
\end{cases}
\end{align}
where $\mathbb{T}_{\lambda}:= \mathbb{R} / 2 \pi \lambda \mathbb{Z}$. $\widehat{\varphi}$ denotes 
the Fourier transform on $\mathbb{T}_{\lambda}$ of $\varphi$ as follows; 
\begin{align*}
\widehat{\varphi}(k) := \frac{1}{ \sqrt{2 \pi}} \int_0^{2 \pi \lambda} 
e^{-ikx} \varphi(x) dx, \hspace{0.3cm} k \in \dot{\mathbb{Z}}_{\lambda}
:= \frac{1}{\lambda} \dot{\mathbb{Z}}.
\end{align*}
Here the space $\dot{H}^{s} (\mathbb{T}_{\lambda})$ is equipped with the norm 
\begin{align*}
\| \varphi \|_{\dot{H}^s(\mathbb{T}_{\lambda})}:=
\| \langle k \rangle^{s} \widehat{\varphi} \|_{l_k^2 (\dot{\mathbb{Z}}_{\lambda})},
\end{align*}
where 
\begin{align*}
\| f \|_{l_k^p(\dot{\mathbb{Z}}_{\lambda})}:= \Bigl( \frac{1}{\lambda}
\sum_{k \in \dot{\mathbb{Z}}_{\lambda}} |f (k)|^{p} \Bigr)^{1/p},
\end{align*}
for $1 \leq p \leq \infty$. 
A direct calculation shows that
\begin{align} \label{in_sm}
\| u_{0,\lambda} \|_{\dot{H}^s (\mathbb{T}_{\lambda})} 
\leq \lambda^{-7/2-s} \| u_0 \|_{ \dot{H}^s (\mathbb{T})}
~~ \text{for} ~~ s<0.   
\end{align}
Therefore we can assume smallness of initial data.  So it suffices to solve (\ref{KT_2}) for sufficiently small data. 
We first summarize the local well-posedness theory.   
The main idea is how to define the function space to construct solutions.  
When $s$ is small, especially negative, the Bourgain space plays an important role. 
The Bourgain space $X^{s,b}(\mathbb{R} \times \mathbb{T}_{\lambda})$ for $2 \pi \lambda$-periodic 
is defined by the norm
\begin{align*}
\| u \|_{X^{s,b} (\mathbb{R} \times \mathbb{T_{\lambda}})}
:= \Bigl\| \langle k \rangle^s \langle \tau-p_{\lambda}(k) \rangle^b \widehat{u}
\Bigr\|_{l_k^2(\dot{\mathbb{Z}}_{\lambda}   ; L_{\tau}^2 (\mathbb{R}) )}, 
\end{align*}
where $p_{\lambda}(k):= k^{5}+\beta \lambda^{-2} k^3$. 
Remark that 
the Bourgain space depends on the linear part of our target equation. 
One of the key estimates is the bilinear estimate in $X^{s,b}$ as follows:
\begin{align} \label{BE_T1}
\| \Lambda^{-1} \p_x (uv) \|_{X^{s,b}} \leq C 
\| u \|_{X^{s,b}} \| v \|_{X^{s,b}},
\end{align}
where $\Lambda^{b}$ is the Fourier multiplier defined as 
$\Lambda^{b}:=
\mathcal{F}_{\tau,k}^{-1} \langle \tau-p_{\lambda}(k) \rangle^{b} \mathcal{F}_{t,x} 
$ for $b \in \mathbb{R}$. 
From the bilinear estimate and some linear estimates, the standard argument of the Fourier restriction norm method works to obtain LWP. 
Hirayama \cite{Hi} showed (\ref{BE_T1}) for $s \geq -1$. On the other hand, he proved that this estimate fails for any $b \in \mathbb{R}$ when $s<-1$. 
So it is difficult to construct the local solutions by the iteration argument when $s<-1$. 
To avoid this difficulty, we modify the Bourgain space $X^{s,b}$ to control strong nonlinear interactions and 
establish the bilinear estimate at the critical regularity $s=-3/2$. 
An idea of a modification of $X^{s,b}$ was developed by Bejenaru and Tao \cite{BT}. 
They considered the quadratic Schr\"{o}dinger equation with the nonlinearity $u^2$ and 
obtained LWP in the critical case $H^{-1}(\mathbb{R})$. 
Note that there is no general framework for modifying $X^{s,b}$. 
This is one of the most difficult points in our study. 
Compared to the non-periodic case, less derivatives can be recovered by the smoothing effects in the 
periodic setting. 
So nonlinear interactions which we can ignore in the non-periodic case take effect. 
Therefore we need to make a more complex modification of $X^{s,b}$ to control three types nonlinear interactions. 
We now mention how to modify $X^{s,b}$. 
From the counterexamples of (\ref{BE_T1}) in the case $s<-1$, we find the regions in which strong nonlinear interactions appear. 
In these domains, 
we make a suitable modification of $X^{s,b}$ as follows; 
\begin{align*}
\| u \|_{Z^s} & :=  \| P_{D_1} u \|_{X^{s,3/4}}+ \| P_{D_2} u \|_{X^{-3s-1,s+1}} \\
&~ + \| P_{D_3} u  \|_{X^{-s/2-1,s/2+1}} + \|  u \|_{Y^s},  \hspace{0.3cm} \text{for} 
\hspace{0.3cm}  -3/2 \leq s \leq -1,
\end{align*}
where $P_{\Omega}$ is the Fourier projection onto a set 
$\Omega \subset \mathbb{R} \times \dot{\mathbb{Z}}_{\lambda}$ 
and 
\begin{align*} 
D_1 &:= \bigl\{ (\tau, k) \in \mathbb{R} \times \dot{\mathbb{Z}}_{\lambda}~;~
|\tau -p_{\lambda} (k) | \leq  |k|^4/10  ~~\text{and}~~ |k| \geq 1 \bigr\}, \\   
D_2 &:= \bigl\{ (\tau, k) \in \mathbb{R} \times \dot{\mathbb{Z}}_{\lambda}~;~  |k|^4/10
 \leq |\tau -p_{\lambda} (k) | \leq  |k|^5/10 ~~\text{and}~~ |k| \geq 1  \bigr\}, \\  
D_3 &:= \bigl\{ (\tau, k) \in \mathbb{R} \times \dot{\mathbb{Z}}_{\lambda}~;~
|\tau -p_{\lambda} (k) | \geq  |k|^5/10 ~~\text{and}~~ \frac{1}{\lambda} \leq |k| \leq 1  \bigr\}.
\end{align*}
Here $\| u \|_{Y^s} := \| \langle k \rangle^{s} \widehat{u} \|_{l_k^2 L_{\tau}^1}$ 
and $Y^s$ is continuously embedded into $C(\mathbb{R} ; \dot{H}^s (\mathbb{T}_{\lambda}))$. 
Using the function space above, 
we obtain the following bilinear estimate which is one of the main estimates in the present paper. 
 \begin{prop} \label{prop_BE_2}
Let $-3/2 \leq s <-1$. Then, the following estimate holds. 
\begin{align} \label{BE_T2} 
\| \Lambda^{-1} \p_x (uv) \|_{Z^s} \leq C \| u \|_{Z^s} \| v \|_{Z^s},  
\end{align}
where a positive constant $C$ is independent of $\lambda$. 
\end{prop}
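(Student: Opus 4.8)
I would prove \eqref{BE_T2} by the usual duality and dyadic-decomposition scheme for bilinear estimates in Bourgain-type spaces, adapted to the modified space $Z^s$, with the resonance identity for $p_\la$ as the driving mechanism.

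\textbf{Step 1 (reduction).} Decompose $u$, $v$ and the output $w := \Lambda^{-1}\p_x(uv)$ into Littlewood--Paley pieces $P_N Q_L$ with $|k| \sim N$ and $\LR{\tau-p_\la(k)} \sim L$ (dyadic), and, following the definition of $Z^s$, into the pieces supported in $D_1, D_2, D_3$ together with a remainder. Since $\|\cdot\|_{Z^s}$ is a sum of four norms, the output must be bounded separately: in $X^{s,3/4}$ over $D_1$, in $X^{-3s-1,s+1}$ over $D_2$, in $X^{-s/2-1,s/2+1}$ over $D_3$, and in $Y^s = l^2_k L^1_\tau$; the last norm carries no modulation weight, so it also covers the output frequencies lying in none of the $D_j$, and there one applies Hölder in $\tau$ (losing an $\e$ of modulation) together with the embedding $Y^s \hookrightarrow C(\R;\dot H^s(\mathbb{T}_\la))$ noted above. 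By duality each resulting estimate becomes a bound for a weighted convolution integral $\int \widehat{u_1}\,\widehat{u_2}\,\widehat{u_3}$ over $k_1+k_2+k_3 = 0$, $\tau_1+\tau_2+\tau_3=0$, by a product of three $l^2_k L^2_\tau$ norms. The dyadic summations in $N_i$ and $L_i$ cost only $\e$-powers, absorbed by the surplus in the chosen exponents (the weight $b=3/4>1/2$ on $D_1$ and the extra spatial weights on $D_2, D_3$), so it suffices to prove each dyadic block estimate with a small $N^{0-}$- or $L^{0-}$-type gain.

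\textbf{Step 2 (resonance).} With $k = k_1 + k_2$ one has
\[
 p_\la(k_1) + p_\la(k_2) - p_\la(k) = - k_1 k_2 k \bigl[\, 5(k_1^2 + k_1 k_2 + k_2^2) + 3\be\la^{-2} \,\bigr],
\]
and since $\tau - p_\la(k) = (\tau_1 - p_\la(k_1)) + (\tau_2 - p_\la(k_2)) + \bigl(p_\la(k_1)+p_\la(k_2)-p_\la(k)\bigr)$ on the convolution set, the largest modulation satisfies $L_{\max} \gec |k_1 k_2 k|\,\bigl|5(k_1^2+k_1 k_2 + k_2^2) + 3\be\la^{-2}\bigr|$. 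Away from the locus where the bracket degenerates this gives $L_{\max} \gec N_1 N_2 N\, N_{\max}^2$, traded against the loss $|k| \lec N$ from $\p_x$ and the gain $\LR{\tau - p_\la(k)}^{-1}$ from $\Lambda^{-1}$. The bracket is $\sim N_{\max}^2$ unless $\max(|k_1|,|k_2|)$ is small, in which case it can be comparable to $3\be\la^{-2}$ or even vanish; these near-resonant low-frequency interactions are precisely what the region $D_3$ and the weighted norm $X^{-s/2-1,s/2+1}$, which places the large weight $|k|^{-s/2-1}$ on a small output frequency, are designed to absorb.

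\textbf{Step 3 (analytic inputs).} The workhorse estimates are: (a) the weak periodic Strichartz estimate $\|P_N Q_L u\|_{L^4_{t,x}(\mathbb{T}_\la)} \lec (N,L)^{0+}\|P_N Q_L u\|_{L^2_{t,x}}$, uniform in $\la$; and (b) the $L^2_{t,x}$ bilinear estimate obtained by counting: for fixed output $(\tau,k)$ and dyadic $L_1, L_2$, the number of $(k_1,\tau_1) \in \tfrac{1}{\la}\Z \times \R$ with $|\tau_i - p_\la(k_i)| \lec L_i$ is $\lec \min(L_1,L_2)\,\bigl(\la\min(L_1,L_2)\,|H'|^{-1} + 1\bigr)$, where $H' = (k_1 - k_2)\,k\,[5(k_1^2+k_2^2)+3\be\la^{-2}]$ is the $k_1$-derivative of the resonance function; this yields a $\la$-uniform bound $\|(P_{N_1}Q_{L_1}u)(P_{N_2}Q_{L_2}v)\|_{L^2_{t,x}} \lec (\min(L_1,L_2)\min(N_1,N_2))^{1/2} N_{\max}^{-1}\|P_{N_1}Q_{L_1}u\|_{L^2}\|P_{N_2}Q_{L_2}v\|_{L^2}$, with improvements when $|k_1 - k_2|$ or $|k|$ is small. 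I would then split into the interaction types HH$\to$H ($N_1\sim N_2\sim N$), HH$\to$L ($N_1\sim N_2\gg N$), HL$\to$H ($N\sim N_{\max}\gg N_{\min}$), LL$\to$L (all frequencies $\lec 1$), and within each by which of $L,L_1,L_2$ dominates and in which $D_j$ each function sits, and feed the estimates above against the resonance gain of Step~2. The generic cases close with room to spare; the delicate ones, where the three-region modification is essential, are (i) the HH$\to$L interactions with the output in $D_3$, where the resonance function degenerates and one exploits the weight $|k|^{-s/2-1}$ on the low output frequency; (ii) interactions with both inputs in $D_1$, so that no modulation gain comes from the inputs and all the smoothing must come from the output modulation being $\sim|H|$, forcing the output into $D_2$ or $D_3$ and using the high spatial weights $-3s-1$ or $-s/2-1$ there; and (iii) tracking every power of $\la$ through the lattice counts to keep the constant $\la$-independent. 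The single hardest point is recovering the full $\p_x$ derivative with no Kato smoothing available in the periodic setting: this is why a single modulation weight does not suffice, and verifying that the three pieces of $Z^s$ together exactly close the near-resonant low-frequency interactions down to the endpoint $s=-3/2$ is the crux of the argument.
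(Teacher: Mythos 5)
Your overall architecture (split the output norm according to $D_1,D_2,D_3$ and $Y^s$, decompose the interactions by relative frequency size and by which modulation dominates, and drive everything with the resonance identity) is the same as the paper's, and your resonance identity in Step 2 is correct. But there is a genuine gap in Step 3: both analytic inputs you propose are non-periodic facts that fail on $\mathbb{T}_\lambda$. The bilinear bound $\|(P_{N_1}Q_{L_1}u)(P_{N_2}Q_{L_2}v)\|_{L^2_{t,x}} \lesssim (\min(L_1,L_2)\min(N_1,N_2))^{1/2} N_{\max}^{-1}\|\cdot\|\|\cdot\|$ is the $\mathbb{R}$-line bilinear smoothing estimate; on the lattice $\tfrac1\lambda\dot{\Z}$ the count of admissible $k_1$ is $\lambda\,L_{\max}|H'|^{-1}+1$, and in exactly the regime where you need the gain ($L_{\max}/|H'|\ll \lambda^{-1}$) the ``$+1$'' dominates and the $|H'|^{-1/2}\sim N_{\max}^{-2}$ factor disappears. (A single-frequency test function $u=e^{iNx+ip_\lambda(N)t}a(t)$ shows the $N_{\max}^{-1}$ decay is simply false.) The paper's Lemma~\ref{lem_L_4} is the correct periodic substitute: after optimizing the interval-length versus lattice-spacing tradeoff it yields only $M_1^{9/40}M_2^{1/2}$, i.e.\ $b+b'\geq 29/40$ with \emph{no} gain in frequency, and the introduction states explicitly that no derivatives can be recovered from the bilinear Strichartz estimate in the periodic case. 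Your $L^4$ estimate with only an $\varepsilon$-loss in $L$ is likewise stronger than what holds (it would give $b+b'=0+$, versus the actual threshold $29/40$).

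Because of this, the claim that ``the generic cases close with room to spare'' does not survive: the full derivative $\p_x$ cannot be paid for by bilinear smoothing anywhere, so in every interaction type it must be routed through the resonance identity into a large modulation, and then absorbed by whichever weighted norm of $Z^s$ lives on the region where that modulation lands ($X^{-3s-1,s+1}$ on $D_2$, $X^{-s/2-1,s/2+1}$ on $D_3$, or the crude H\"older/Young bounds through $Y^s=l^2_kL^1_\tau$ when the large modulation sits on an input factor). This is what the paper's case-by-case analysis over $\Omega_0,\dots,\Omega_5$ actually does, using Lemma~\ref{lem_L_4} only with exponents satisfying $b+b'\geq 29/40$ and otherwise relying on H\"older and Young. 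A smaller point: on $\tfrac1\lambda\dot{\Z}$ with $\lambda\geq1$ the bracket $5(k_1^2+k_1k_2+k_2^2)+3\beta\lambda^{-2}=\tfrac52(k^2+k_1^2+k_2^2)+3\beta\lambda^{-2}$ never vanishes and is always $\sim k_{\max}^2$; the true degeneracy is not the bracket but the smallness of the output frequency $|k|$ in the high--high--low interaction, which is what $D_3$ and the region $\Omega_1$ are built to handle. So your identification of the delicate regions is essentially right, but the mechanism you propose for closing the non-delicate ones is not available in the periodic setting and the estimate would not close as written.
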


\vspace{0.5em}

Next, we extend the local solution obtained above globally in time. 
In the case $s$ is negative, we have no conservation laws. To avoid this difficulty, 
we apply the I-method exploited by Colliander, Keel, Staffilani, Takaoka and Tao \cite{CoKe}, \cite{I02}. 
The main idea is to use a modified energy defined for less regular functions, which is not conserved. 
If we control the growth of the modified energy in time, this enables us to 
iterate the local theory to continue the solution to any time $T$. 
We now mention the definition of the modified energy $E_I^{(2)} (u)$. 
The operator $I: H^s \rightarrow L^2$ is the Fourier multiplier defined as 
$I= \mathcal{F}_{\xi}^{-1} m (\xi) \mathcal{F}_x$. 
Here $m$ is a smooth and monotone function satisfying 
\begin{align*}
m(\xi):=
\begin{cases}
1 ~~& \text{for} ~~ |\xi| \leq N \\
|\xi|^{s} N^{-s} ~~ & \text{for} ~~|\xi| \geq 2N,
\end{cases}
\end{align*} 
for $s<0$ and $N \gg1$. The modified energy $E_I^{(2)} (u)$ is defined as 
$E_{I}^{(2)} (u)(t) := \| I u (t) \|_{L^2}^2 $. In the I-method, the key estimate is 
the almost conservation law which implies the increment of the modified energy 
is sufficiently small for a short time interval and large $N$. 
Following the argument of \cite{CoKe}, we obtain the almost conservation law and show GWP for 
$s > -21/26$. However, the growth of the modified energy $E_I^{(2)} (u)$ in time cannot be controlled for 
$-1 \leq s \leq -21/26$. 
Then we add some correction terms to the original modified energy 
$E_I^{(2)} (u)$ to construct a new modified energy in order to remove some oscillations in this functional. 
This idea was developed by Colliander, Keel, Staffilani, Takaoka and Tao \cite{CoKeSt}. They \cite{CoKeSt} proved 
GWP of the KdV equation for $s>-3/4$ in the case of $\mathbb{R}$ and for $s>-1/2$ in the periodic 
case. Chen and Guo \cite{CG} establish the sharp upper bound of some multiplier to show GWP of the Kawahara equation 
for $s \geq -7/4$ in the case of $\mathbb{R}$. 
Following the argument of \cite{CG}, we obtain the almost conservation law for 
the modified energy $E_I^{(4)} (u)$ by adding two suitable correction terms to 
the original functional when $s \geq -1$. 
On the other hand, the difference between the almost conserved quantities $E_I^{(4)} (u)$ and 
the first modified energy $E_I^{(2)} (u)$ can be controlled by $E_I^{(2)}(u)$ when the time is fixed. 
This estimate and the almost conservation law imply that 
the well-posedness on any time interval. Remark that 
we do not expect to recover any derivatives by the bilinear Strichartz estimate in the periodic case 
(see Lemma~\ref{lem_L_4} in section 2). This is the reason why it is hard so that 
the I-method is applicable when $s<-1$. 

\vspace{0.5em}

We use the following notations in this paper. 
$A \lesssim B$ means $A \leq C B$ for some positive constant $C$ 
and $A \sim B$ when both $A \lesssim B$ and $B \lesssim A$. 
$c+$ means $c+\varepsilon$, while $c-$ means $c- \varepsilon$ where 
$\varepsilon>0$ is enough small. 
For a normed space $\mathcal{X}$ and a set $\Omega$, 
$\| \cdot \|_{\mathcal{X}(\Omega) }$ denotes $\| f \|_{\mathcal{X} (\Omega)} := \| \chi_{\Omega} f \|_{\mathcal{X}}$ 
where $\chi_{\Omega}$ is the characteristic function of $\Omega$. 

The rest of this paper is planned as follows. In Section 2, we give some preliminary lemmas. 
In Section 3, we prove the bilinear estimate (\ref{BE_T2}) and
 give the proof of LWP in Section 4.  
In Section 5, we show GWP by the I-method, following \cite{CG} and \cite{CoKeSt}
In Section 6, we give the proof of Theorem~\ref{ill_TK} which is based on Bourgain's work \cite{Bo97}. 

\vspace{1em}
 
\noindent
\textbf{Acknowledgment.} The author would like to appreciate his adviser Professor Yoshio Tsutsumi for many helpful conversation and encouragement and 
thank Professor Kotaro Tsugawa and Professor Nobu Kishimoto for helpful comments.

\section{Preliminaries}
In this section, we prepare the bilinear Strichartz estimate to show the main estimates. When we use the variables $(\tau,k)$, $(\tau_1,k_1)$ and $(\tau_2, k_2)$, we always assume the relation
\begin{align*}
(\tau, k)=(\tau_1, k_1) + (\tau_2, k_2).
\end{align*}
The bilinear estimate (\ref{BE_T2}) can be established by the H\"{o}lder inequality, the Young inequality and the following estimate.     

\begin{lem} \label{lem_L_4} 
If $b, b' \in \mathbb{R}$ satisfy $b+b' \geq 29/40$ and $b, b' > 9/40$, then we have 
\begin{align} \label{es_L_4_1}
\bigl\| P_{\{ |k| \geq 1  \}} ( uv ) \bigr\|_{L_{t,x}^2} & \lesssim \| u \|_{X^{0,b}} \| v \|_{X^{0,b'}}, \\ 
\label{es_L_4_2}   
\bigl\| u ( P_{\{ |k| \geq 1 \}} v) \bigr\|_{X^{0,-b'}} & \lesssim \| u \|_{X^{0,b}} \| v \|_{L_{t,x}^2}.  
\end{align}
\end{lem}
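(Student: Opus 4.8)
The plan is to reduce both estimates, by duality, to one pointwise bound for a multiplier, and then to a lattice-point count governed by the resonance function of the linear evolution. The estimates \eqref{es_L_4_1} and \eqref{es_L_4_2} are equivalent: $X^{0,-b'}$ is the dual of $X^{0,b'}$ for the $L^2_{t,x}$ pairing, $p_\lambda$ is odd so that $\|\bar u\|_{X^{0,b}}=\|u\|_{X^{0,b}}$, and the identity $\langle u\,(P_{\{|k|\ge1\}}v),\,w\rangle_{t,x}=\langle v,\,P_{\{|k|\ge1\}}(\bar u\,w)\rangle_{t,x}$ turns \eqref{es_L_4_1} into \eqref{es_L_4_2}; so it suffices to prove \eqref{es_L_4_1}. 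By Plancherel on $\mathbb{R}\times\mathbb{T}_{\lambda}$, writing $\widehat u=\langle\tau_1-p_\lambda(k_1)\rangle^{-b}F$ and $\widehat v=\langle\tau_2-p_\lambda(k_2)\rangle^{-b'}G$ with $\|F\|_{l^2_k L^2_\tau}=\|u\|_{X^{0,b}}$, $\|G\|_{l^2_k L^2_\tau}=\|v\|_{X^{0,b'}}$, and then using the Cauchy--Schwarz inequality in the convolution variable $(\tau_1,k_1)$ (the remaining $L^2$-mass being handled by Fubini), \eqref{es_L_4_1} follows from the uniform bound
\[
\sup_{(\tau,k)\,:\,|k|\ge1}K(\tau,k)<\infty,\qquad
K(\tau,k):=\frac{1}{\lambda}\sum_{k_1+k_2=k}\int_{\mathbb{R}}\frac{d\tau_1}{\langle\tau_1-p_\lambda(k_1)\rangle^{2b}\langle\tau_2-p_\lambda(k_2)\rangle^{2b'}}.
\]

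\emph{Integrating in $\tau_1$.} Setting $\sigma:=\tau-p_\lambda(k)$ and $\sigma_j:=\tau_j-p_\lambda(k_j)$, one has $\sigma_1+\sigma_2=\sigma+h(k,k_1)$ with the resonance function $h(k,k_1):=p_\lambda(k)-p_\lambda(k_1)-p_\lambda(k-k_1)$. The elementary estimate $\int_{\mathbb{R}}\langle x\rangle^{-2b}\langle c-x\rangle^{-2b'}\,dx\lesssim\langle c\rangle^{-\mu}$, valid up to a logarithm at equality with $\mu=\min(2b,2b',2b+2b'-1)$, applies precisely under the hypotheses $b,b'>9/40$ and $b+b'\ge29/40$, for which $\mu\ge 9/20$; hence
\[
K(\tau,k)\lesssim\frac{1}{\lambda}\sum_{k_1+k_2=k}\langle\sigma+h(k,k_1)\rangle^{-9/20}.
\]

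\emph{The count.} A direct computation gives, with $k_2=k-k_1$,
\[
h(k,k_1)=k\,k_1\,k_2\bigl(5(k_1^2+k_1k_2+k_2^2)+3\beta\lambda^{-2}\bigr),
\]
and since $k_1^2+k_1k_2+k_2^2=k^2-k_1k_2\ge\tfrac34 k^2\ge\tfrac34$ while $\lambda\ge1$, $|\beta|\le1$, the last factor is positive and bounded below; in particular $h$ has no zeros on $\{|k|\ge1\}$. For fixed $k\neq0$ the polynomial $k_1\mapsto h(k,k_1)$ has degree $4$ with leading coefficient $-5k$, of modulus $\gtrsim|k|\ge1$; by the polynomial sublevel-set inequality,
\[
\bigl|\{k_1\in\mathbb{R}:|h(k,k_1)-a|\le M\}\bigr|\lesssim (M/|k|)^{1/4},
\]
uniformly in $a$ and $M>0$; this holds in particular across the near-diagonal region $k_1\approx k_2$, where $h'$ degenerates and $h$ is only quadratic, but with a large coefficient $\sim k^3$, so no blow-up of the count occurs. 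Decomposing the $k_1$-sum into dyadic pieces $\langle\sigma+h(k,k_1)\rangle\sim M\ge1$, passing from the normalized lattice sum to the corresponding integral (legitimate since $\lambda\ge1$ and $h$ has $O(1)$ monotone branches), and summing $M^{-9/20}(M/|k|)^{1/4}$ over dyadic $M$, one gets $K(\tau,k)\lesssim|k|^{-1/4}\lesssim1$ uniformly for $|k|\ge1$, with constants independent of $\lambda$. This is the required multiplier bound.

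\emph{Main difficulty.} The crux is the counting step, where the gain comes entirely from the leading coefficient of $h(k,\cdot)$ being comparable to $|k|$; this is exactly where the frequency cut $P_{\{|k|\ge1\}}$ is used, and without it the estimate fails. The remaining care is bookkeeping: the calculus lemma must be invoked in the exact range dictated by $b,b'>9/40$, $b+b'\ge29/40$, and the count together with the lattice-sum-to-integral passages must be arranged so that every constant is independent of $\lambda$, using the normalization of $l^p_k(\dot{\mathbb{Z}}_{\lambda})$ and $\lambda\ge1$; this uniformity is what makes the estimate usable in the scaling argument of the paper.
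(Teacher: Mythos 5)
Your proof is correct, and while the overall skeleton (duality for \eqref{es_L_4_2}, Cauchy--Schwarz reduction to a weighted lattice count on the resonance surface) matches the paper, the two key technical steps are carried out differently. The paper decomposes both modulations dyadically, $\langle\tau_i-p_\lambda(k_i)\rangle\sim M_i$, and then counts the support set directly by explicitly inverting the resonance identity for $(k_1-k_2)^2$ and estimating the resulting variation of $k_1$, using a Young-inequality interpolation with exponent $p=5/3$ to balance the degeneration near $k_1=k_2$ against the size of $|k|$; this produces the count $\lambda\max\{M_1,M_2\}^{9/20}$, which is precisely what dictates the thresholds $b,b'>9/40$ and $b+b'\geq 29/40$. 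You instead integrate out $\tau_1$ first via the standard convolution lemma (the hypotheses enter exactly here, giving decay $\langle\sigma+h\rangle^{-9/20}$) and then count via the polynomial sublevel-set inequality applied to the degree-$4$ polynomial $k_1\mapsto h(k,k_1)$ with leading coefficient $-5k$. This is cleaner: it absorbs the near-diagonal degeneration automatically and yields the stronger count $\lambda(M/|k|)^{1/4}$, which would in fact tolerate weaker hypotheses on $b,b'$ than the lemma states. Two cosmetic points: the lattice-to-integral passage leaves an $O(\lambda^{-1})$ error per dyadic shell, so the honest conclusion is $K(\tau,k)\lesssim |k|^{-1/4}+\lambda^{-1}\lesssim 1$ rather than $|k|^{-1/4}$ alone (harmless, since $\sum_M M^{-9/20}$ converges and only boundedness is used); and the logarithm in the calculus lemma arises when $\max(2b,2b')=1$, not at $b+b'=29/40$, but it is absorbed in either case because $2b,2b'>9/20$ strictly.
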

\begin{proof}
For a dyadic number $M \geq 1$, $u_M$ denotes that the support of $\widehat{u}$ is restricted to 
the dyadic block $\{ \langle \tau-p_{\lambda}(k)  \rangle \sim M \}$. We use the triangle inequality 
and the Plancherel theorem to have 
\begin{align*}
\bigl\| P_{\{ |k| \geq 1 \}} (uv)  \bigr\|_{L_{t,x}^2} & \lesssim 
\sum_{M_1,M_2 \geq 1} \bigl\| P_{\{ |k| \geq 1 \}} (u_{M_1} v_{M_2} ) \bigr\|_{L_{t,x}^2} \\
& \sim \sum_{M_1, M_2 \geq 1} \Bigl\| \frac{1}{ \lambda} \sum_{k_1 \in \mathbb{Z}_{\lambda}}
\int_{\mathbb{R}}  \widehat{u}_{M_1}(\tau_1, k_1) \widehat{v}_{M_2} (\tau_2 ,k_2) 
d\tau_1 \Bigr\|_{l_k^2 L_{\tau}^2 (|k| \geq 1)} .
\end{align*}  
Using the Schwarz inequality twice, the above is bounded by 
\begin{align} \label{es_dyb}
\sum_{M_1, M_2 \geq 1}  \sup_{(\tau,k) \in \mathbb{R} \times \mathbb{Z}_{\lambda} }
\Bigl( \frac{1}{\lambda} 
\sum_{k_1 \in \mathbb{Z}_{\lambda}} \int_{\mathbb{R}} \chi_E(\tau,k,\tau_1,k_1)
d\tau_1  \Bigr)^{1/2} 
\| u_{M_1} \|_{L_{t,x}^2} \| v_{M_2} \|_{L_{t,x}^2},
\end{align}
where 
\begin{align*}
E:=\bigl\{ (\tau,k,\tau_1, k_1)  \in ( \mathbb{R} \times \mathbb{Z}_{\lambda} )^2 
~;~ |\tau_1-p_{\lambda} (k_1) | \sim M_1, ~ |\tau_2-p_{\lambda} (k_2)| \sim M_2
, ~|k| \geq 1  \bigr\}. 
\end{align*}
We now show the following estimate when 
$M_1 \geq M_2$. 
\begin{align} \label{es_dy_T1}
\sup_{(\tau,k) \in \mathbb{R} \times \mathbb{Z}_{\lambda}} \frac{1}{\lambda} 
\sum_{k_1 \in \mathbb{Z}_{\lambda}} \int_{\mathbb{R}} \chi_E(\tau,k,\tau_1,k_1)
d\tau_1  \lesssim M_1^{9/20} M_2. 
\end{align}
The identity, 
\begin{align*}
& \bigl( \tau- \frac{k^5}{16}- \beta \lambda^{-2} \frac{k^3}{4} \bigr)-
(\tau_1-p_{\lambda}(k_1)) -(\tau_2-p_{\lambda}(k_2)) \\
& \hspace{1.2cm} = \frac{5}{16} k (k_1-k_2)^2 
\bigl\{ (k_1-k_2)^2+ 2 k^2 + \frac{12}{5} \beta \lambda^{-2} \bigr\}, 
\end{align*}
implies 
\begin{align*}
& (k_1-k_2)^2= \\
&~~ \Bigl\{ \frac{ L_0+O(\max \{M_1, M_2 \}) }{|k|} +(k^2 +\frac{6}{5} \beta \lambda^{-2})^2 \Bigr\}^{1/2}-(k^2 + \frac{6}{5} \beta \lambda^{-2}),
\end{align*}
where $ \displaystyle  L_0:= \frac{16}{5} \bigl| \tau- \frac{k^5}{16}-\beta \lambda^{-2} \frac{k^3}{4}  \bigr|$. 
Now $(\tau,k)$ is fixed. Then the variation of $k_1$ is bounded by 
\begin{align} \label{es_leng1}
& \lambda  \frac{\max \{ M_1, M_2\}}{|k|}  
\Bigl\{ \frac{L_0+ O(\max\{ M_1, M_2 \}  )}{|k|}+
(k^2+ \frac{6}{5} \beta \lambda^{-2} )^2  \Bigr\}^{-1/2} \nonumber \\
\times & \Bigl[ \Bigl\{\frac{L_0+ O(\max \{M_1,M_2  \}) }{|k|} +(k^2+ \frac{6}{5} \beta \lambda^{-2} )^{2} \Bigr\}^{1/2} -(k^2+ \frac{6}{5} \beta \lambda^{-2} ) \Bigr]^{-1/2}. 
\end{align}
Note that for $|k| \geq 1$
\begin{align} \label{es_leng2}
& \Bigl[ \Bigl\{\frac{L_0+ O(\max \{M_1,M_2  \}) }{|k|} +(k^2+ \frac{6}{5} \beta \lambda^{-2} )^{2} \Bigr\}^{1/2} -(k^2+ \frac{6}{5} \beta \lambda^{-2} ) \Bigr]^{-1/2} 
\nonumber \\
& \hspace{1cm} \gtrsim |k|^{-3/2} O(\max \{M_1, M_2 \})^{1/4}.
\end{align}
We apply (\ref{es_leng2}) and the Young inequality to (\ref{es_leng1}) so that the variation of $k_1$ is at most 
\begin{align*}
\lambda |k|^{3/2-5/2p} \max\{M_1, M_2 \}^{3/4-1/2p} \hspace{0.3cm} \text{for}  \hspace{0.3em} 
1<p<\infty.
\end{align*} 
The above is equal to $\lambda \max\{M_1,M_2 \}^{9/20}$ with $p=5/3$. 
If we also fix $k_1$, $\tau_1$ is restricted to the interval of measure 
$O(\max\{ M_1,M_2 \})$. Therefore we obtain (\ref{es_dy_T1}) when 
$M_1 \geq M_2$. 
Substituting (\ref{es_dy_T1}) into (\ref{es_dyb}), we have
\begin{align*}
\| P_{\{ |k| \geq 1 \}} uv \|_{L_{t,x}^2} \lesssim &
\sum_{M_1, M_2 \geq 1} M_1^{9/40} M_2^{1/2} \| u_{M_1} \|_{L_{t,x}^2}
\| v_{M_2} \|_{L_{t,x}^2} \\
= & \sum_{N, M_2 \geq 1} M_2^{29/40} N^{9/40} \| u_{N M_2} \|_{L_{t,x}^2} 
\| v_{M_2} \|_{L_{t,x}^2} \\
\lesssim & 
\sum_{N \geq 1} \sum_{M_2 \geq 1} N^{9/40-b} M_2^{29/40-(b+b')}
(N M_2)^{b} \| u_{N M_2} \|_{L_{t,x}^2} M_2^{b'} \| v_{M_2} \|_{L_{t,x}^2}.
\end{align*}
Applying the Schwarz inequality in $M_2$ and summing over $N$, 
we obtain the desired estimate. 

On the other hand, we immediately obtain (\ref{es_L_4_2}) from the duality argument. 
\end{proof}

We put  a one parameter semigroup $U_{\lambda}(t)$ as follow:
\begin{align*}
U_{\lambda}(t):= \mathcal{F}_{ k }^{-1} \exp(i p_{\lambda}(k) t ) \mathcal{F}_{x}. 
\end{align*}
For any time interval $I$, we define the restricted space $Z^s (I)$ by the norm 
\begin{align*}
\| u \|_{Z^s(I)}:= \inf \bigl\{ \| v \|_{Z^s} ~;~ 
u(t)=v(t) ~~\text{on} ~~t \in I  \bigr\}. 
\end{align*}
From the definition, $Z^s ([0,T])$ has the property as follows; 
\begin{align*}
X^{s,3/4} ([0,T]) \hookrightarrow 
Z^s ([0,T]) \hookrightarrow C([0,T]; \dot{H}^s (\mathbb{T}_{\lambda})) .
\end{align*}
The above property implies the following linear estimates. 
\begin{prop} \label{prop_linear1}
Let $s \in \mathbb{R}$, $T>0$ and $\lambda \geq 1$. Then, we have 
\begin{align*} 
\| U_{\lambda}(t) u_0 \|_{Z^s([0,T]) } \lesssim  \| u_0 \|_{\dot{H}^s(\mathbb{T}) } . 
\end{align*}
\end{prop}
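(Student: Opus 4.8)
The plan is to reduce the statement to the classical ``free solution lives in $X^{s,b}$'' estimate and then let the chain of embeddings
\[
X^{s,3/4}([0,T]) \hookrightarrow Z^s([0,T]) \hookrightarrow C([0,T];\dot{H}^s(\mathbb{T}_{\lambda}))
\]
recorded just before the proposition carry the bound over to the complicated norm $Z^s$. Since $\|\cdot\|_{Z^s([0,T])}$ is by definition an infimum over extensions, it suffices to exhibit, for each datum, one space-time function that agrees with $U_{\lambda}(t)u_0$ on $[0,T]$ and has controlled $X^{s,3/4}$-norm. The natural choice is $\psi(t)\,U_{\lambda}(t)u_0$ for a fixed cutoff $\psi\in\mathcal{S}(\mathbb{R})$ with $\psi\equiv 1$ on $[0,T]$; in the application one has already rescaled so that the relevant time interval is of unit length, hence $\psi$ may be taken once and for all and the resulting constant is independent of $T$ and of $\lambda$ (for general $T$ one uses $\psi(t/T)$ and the constant picks up only a mild, harmless power of $T$).

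First I would compute the $X^{s,3/4}$-norm of this extension on the Fourier side. Because
$\mathcal{F}_{t,x}\bigl[\psi(t)U_{\lambda}(t)u_0\bigr](\tau,k) = \widehat{\psi}\bigl(\tau-p_{\lambda}(k)\bigr)\,\widehat{u_0}(k)$
(up to an irrelevant normalization constant), Plancherel gives
\[
\bigl\|\psi(t)U_{\lambda}(t)u_0\bigr\|_{X^{s,3/4}}^2
= \frac{1}{\lambda}\sum_{k\in\dot{\mathbb{Z}}_{\lambda}}\langle k\rangle^{2s}\,|\widehat{u_0}(k)|^2
\int_{\mathbb{R}}\langle\tau-p_{\lambda}(k)\rangle^{3/2}\,\bigl|\widehat{\psi}\bigl(\tau-p_{\lambda}(k)\bigr)\bigr|^2\,d\tau .
\]
The translation $\sigma=\tau-p_{\lambda}(k)$ collapses the inner integral to $\int_{\mathbb{R}}\langle\sigma\rangle^{3/2}|\widehat{\psi}(\sigma)|^2\,d\sigma$, a finite number depending only on $\psi$ — in particular independent of $k$ and of $\lambda$, since $\widehat{\psi}$ is Schwartz. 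Hence $\|\psi(t)U_{\lambda}(t)u_0\|_{X^{s,3/4}}\lesssim \|u_0\|_{\dot{H}^s(\mathbb{T}_{\lambda})}$, and as this function equals $U_{\lambda}(t)u_0$ on $[0,T]$ the embedding above yields $\|U_{\lambda}(t)u_0\|_{Z^s([0,T])}\lesssim \|u_0\|_{\dot{H}^s(\mathbb{T}_{\lambda})}$.

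Finally, to match the statement literally I would pass from the rescaled datum on $\mathbb{T}_{\lambda}$ to the original one on $\mathbb{T}$ via the scaling inequality (\ref{in_sm}): in the range of interest ($s<0$) one has $\|u_{0,\lambda}\|_{\dot{H}^s(\mathbb{T}_{\lambda})}\le \lambda^{-7/2-s}\|u_0\|_{\dot{H}^s(\mathbb{T})}\le \|u_0\|_{\dot{H}^s(\mathbb{T})}$, using $\lambda\ge 1$ and $-7/2-s<0$, while for $s\ge 0$ one simply argues with the $\dot{H}^s(\mathbb{T}_{\lambda})$-norm throughout. There is no genuine analytic obstacle here; the estimate is the standard linear lemma for the Fourier restriction norm method. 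The only points that deserve a word of care are keeping the constant uniform in $\lambda$ — which works precisely because the $\tfrac{1}{\lambda}$-normalized $\ell^2_k(\dot{\mathbb{Z}}_{\lambda})$ inner product appears identically on both sides and the $\tau$-integral is $\lambda$-free after the change of variables — and uniform in $T$, which is ensured by the scaling reduction to unit time and the use of a fixed cutoff $\psi$.
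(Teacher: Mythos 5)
Your proof is correct and is exactly the standard argument: extend the free solution by a Schwartz time cutoff, compute its $X^{s,3/4}$-norm on the Fourier side (where the $\tau$-integral is $\lambda$-free after translation), and invoke the embedding $X^{s,3/4}([0,T])\hookrightarrow Z^s([0,T])$ together with the infimum definition of the restricted norm. The paper does not prove this proposition itself but delegates it to the reference \cite{BT}, where the same cutoff-and-Plancherel computation is carried out, so your route coincides with the intended one; your remarks on uniformity in $\lambda$ and on reducing to unit time are the right points to flag.
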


\begin{prop} \label{prop_linear2}
Let $s \in \mathbb{R}$, $T>0$ and $\lambda \geq 1$. If the bilinear estimate 
(\ref{BE_T2}) holds, then we have
\begin{align*}
\bigl\| \int_0^t U_{\lambda}(t-t') F(t') dt'   \bigr\|_{Z^s ([0,T]) } 
\lesssim  \| u \|_{Z^s ([0,T])} 
\| v \|_{Z^s([0,T])}. 
\end{align*}
\end{prop}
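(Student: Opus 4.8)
The plan is to obtain the proposition by combining a dedicated estimate for the inhomogeneous (Duhamel) term in the modified space $Z^s$ with the bilinear estimate (\ref{BE_T2}); throughout, $F$ is read as the quadratic nonlinearity $\p_x(uv)$ of (\ref{KT_2}). First I would reduce matters to the operator bound
\begin{align*}
\Bigl\| \int_0^t U_\lambda(t-t')F(t')\,dt' \Bigr\|_{Z^s([0,T])} \lesssim \| \Lambda^{-1}F \|_{Z^s},
\end{align*}
with implicit constant independent of $\lambda$ (and of $T$ for $T$ bounded, which is all that is used after the rescaling (\ref{in_sm})). Granting this: for any extensions $\tilde u,\tilde v$ of $u,v$, the Duhamel term of $\p_x(\tilde u\tilde v)$ agrees on $[0,T]$ with that of $\p_x(uv)$, so applying the operator bound with $F=\p_x(\tilde u\tilde v)$, then (\ref{BE_T2}), and finally taking the infimum over extensions gives the proposition.

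To prove the operator bound I would fix $\psi\in C_c^\infty(\R)$ with $\psi\equiv1$ near $[0,T]$ and use the classical Fourier-side formula for $\psi(t)\int_0^t U_\lambda(t-t')F(t')\,dt'$ (as in the work of Bourgain, Kenig--Ponce--Vega and Ginibre--Tsutsumi--Velo). Writing $\sigma:=\tau-p_\lambda(k)$ and $\sigma':=\tau'-p_\lambda(k)$, this decomposes the Duhamel term, at each fixed $k$, into a high-modulation part with Fourier transform $\rho(\sigma)\,\widehat F(\tau,k)/(i\sigma)$, where $\rho$ is a smooth cutoff to $\{|\sigma|\gtrsim1\}$, a resonant term $\widehat\psi(\sigma)\,v_0(k)$ concentrated on $\{|\sigma|\lesssim1\}$ with $v_0(k)=\int\widehat F(\tau',k)\,d\tau'/(i\sigma')$ (the apparent singularity at $\sigma'=0$ being removed in the true identity), and smooth errors of the same two shapes. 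Now the Duhamel operator is diagonal in $k$ and, apart from the $O(1)$ modulation spreading inherent in $\psi$ (which is harmless because $D_1$, $D_2$, $D_3$ and $\{|k|\geq1,\ |\sigma|\geq|k|^5/10\}$ are separated by much larger scales), does not transfer Fourier mass between those four regions; hence the high-modulation part equals $\Lambda^{-1}F$ there up to a bounded multiplier in $\sigma$, so each of $\|P_{D_1}(\cdot)\|_{X^{s,3/4}}$, $\|P_{D_2}(\cdot)\|_{X^{-3s-1,s+1}}$, $\|P_{D_3}(\cdot)\|_{X^{-s/2-1,s/2+1}}$ and $\|\cdot\|_{Y^s}$ of it is dominated by the corresponding component of $\|\Lambda^{-1}F\|_{Z^s}$. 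The resonant term is $\widehat\psi(\sigma)$ times a function of $k$ alone, so on $\{|\sigma|\lesssim1\}$, where the $\LR{\sigma}^b$ weights are all $\sim1$, where one lies in $D_1$ when $|k|\geq1$, and where the relevant $k$-weights are $\sim1$ uniformly in $\lambda$ when $1/\lambda\leq|k|\leq1$, each component of its $Z^s$-norm is $\lesssim\|\LR{k}^s v_0\|_{l_k^2}$; and this last quantity is handled by splitting the $\tau'$-integral defining $v_0$ over the same four regions, the $D_1$-, high-modulation-, and $D_3$-parts being absorbed by the $X^{s,3/4}$-, $Y^s$-, and $X^{-s/2-1,s/2+1}$-components of $\|\Lambda^{-1}F\|_{Z^s}$, and the $D_2$-part by the $X^{-3s-1,s+1}$-component. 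The smooth errors are of the same two types and treated identically, and one checks in passing that the entire Duhamel term lies in $Y^s$.

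The main, and essentially only, obstacle is the region-by-region bookkeeping of $k$- and $\sigma$-weights. The sensitive point is the contribution of the $D_2$-part of $F$ to the resonant coefficient $v_0$: there the kernel $1/\sigma'$ gains only $|k|^{-4}$, so a Cauchy--Schwarz over the $\sigma'$-interval of length $\sim|k|^5$ makes that piece of $v_0(k)$ cost a power of $|k|$ that is matched by the higher spatial regularity $-3s-1$ of the $X^{-3s-1,s+1}$-component precisely when $s\geq-3/2$; thus the critical exponent $s=-3/2$ already appears in this linear estimate. Everything else is routine and $\lambda$-uniform. All the substantive difficulty of the low-regularity theory sits in the bilinear estimate (\ref{BE_T2}) (Proposition~\ref{prop_BE_2}), which is assumed here, and the space $Z^s$, in particular the $Y^s$-term carrying a full $\Lambda^{-1}$ on the right-hand side of (\ref{BE_T2}) and the choice of the exponents on $D_2$ and $D_3$, is engineered exactly so that, together with Proposition~\ref{prop_linear1}, the present estimate closes the Picard iteration for Theorem~\ref{LWP_TK}.
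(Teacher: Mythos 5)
The paper does not actually prove this proposition: it records the embeddings $X^{s,3/4}([0,T])\hookrightarrow Z^s([0,T])\hookrightarrow C([0,T];\dot H^s)$ and then refers to \cite{BT}, whose abstract local-well-posedness framework requires exactly the operator bound you isolate, namely $\bigl\|\int_0^tU_\lambda(t-t')F(t')\,dt'\bigr\|_{Z^s}\lesssim\|\Lambda^{-1}F\|_{Z^s}$, composed with (\ref{BE_T2}). So your reduction and your Fourier-side decomposition of the truncated Duhamel term (high-modulation part $\sim\rho(\sigma)\widehat F/(i\sigma)$ plus resonant part $\widehat\psi(\sigma)v_0(k)$) is precisely the argument the citation hides, and the architecture is correct. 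Your identification of where the structure of $Z^s$ enters is also essentially right: the reason the operator does not leak between the components is not that the regions are ``separated by much larger scales'' (they are not: the boundaries $|\sigma|\sim|k|^4/10$, $|k|^5/10$ sit at $O(1)$ when $|k|\sim1$), but that the weights of adjacent components are designed to match at the boundaries --- e.g.\ $\langle k\rangle^s\langle\sigma\rangle^{3/4}$ and $\langle k\rangle^{-3s-1}\langle\sigma\rangle^{s+1}$ both equal $\langle k\rangle^{s+3}$ at $|\sigma|\sim|k|^4$, and the $D_2$ and $D_3$ weights both equal $\langle k\rangle^{2s+4}$ at $|\sigma|\sim|k|^5$ --- so the $O(1)$ smearing by $\widehat\psi$ is harmless. (One should also note that, as literally printed, $D_1\cup D_2\cup D_3$ does not cover $\{|k|\geq1,\ |\sigma|\geq|k|^5/10\}$, where only the $l_k^2L_\tau^1$-based $Y^s$ norm is available; the paper's own use of the $X^{-s/2-1,s/2+1}$ norm for such frequencies in cases (Ia), (IIIa), (Va) shows the restriction $|k|\leq1$ in $D_3$ is not intended, and your argument needs that reading to close.)

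The one step that, as written, would fail is your treatment of the resonant coefficient $v_0(k)=\int\widehat F(\tau',k)\,d\tau'/(i\sigma')$: you propose to absorb its $D_3$-part into the $X^{-s/2-1,s/2+1}$-component, but the corresponding Cauchy--Schwarz requires $\int\langle\sigma'\rangle^{-s-2}\,d\sigma'<\infty$, i.e.\ $s>-1$, which fails throughout the range $-3/2\leq s\leq-1$. The repair is immediate and in fact simplifies your whole discussion of $v_0$: since the true kernel is bounded by $\langle\sigma'\rangle^{-1}$, one has $|v_0(k)|\lesssim\|\langle\sigma'\rangle^{-1}\widehat F(\cdot,k)\|_{L^1_{\tau'}}$ and hence $\|\langle k\rangle^sv_0\|_{l_k^2}\lesssim\|\Lambda^{-1}F\|_{Y^s}$ in one stroke; this also makes the $D_2$-computation you single out as ``the sensitive point'' (correct though it is, with the exponent $-s-3/2$ indeed forcing $s\geq-3/2$) unnecessary, since the $Y^s$-component of $\|\Lambda^{-1}\p_x(uv)\|_{Z^s}$ is exactly what estimate (\ref{BE_Y}) in the proof of Proposition~\ref{prop_BE_2} provides. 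With these two adjustments your proof is complete and, unlike the paper's, self-contained.
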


For the proofs of these propositions, see \cite{BT}.

\section{Proof of the bilinear estimate}

\noindent
In this section, we give a proof of the bilinear estimate (\ref{BE_T2}). For simplicity, we introduce the Fourier multiplier 
$J^{\sigma}:= \mathcal{F}_{k}^{-1} \langle k \rangle^{\sigma} \mathcal{F}_x$ for $\sigma \in \mathbb{R}$. 
Proposition~\ref{prop_BE_2} can be established by H\"{o}lder's and Young's inequalities and Lemma~\ref{lem_L_4}.  

\begin{proof}[Proof of Proposition~\ref{prop_BE_2}]
We prove the following two estimates to obtain (\ref{BE_T2}).
\begin{align}
\label{BE_X}
\| \Lambda^{-1} \p_x (uv) \|_{X_{w}^s} & \lesssim 
\| u \|_{Z^s} \| v \|_{Z^s}, \\
\label{BE_Y}
\| \Lambda^{-1} \p_{x} (uv) \|_{Y^s} & \lesssim 
\| u \|_{Z^s} \| v \|_{Z^s},
\end{align}
where $\| \cdot \|_{X_w^s}$ is the norm 
removing $\| \cdot \|_{Y^s} $ from $\| \cdot \|_{Z^s}$. 
Firstly, we divide $(\mathbb{R} \times \dot{\mathbb{Z}}_{\lambda})^2$ into 
six parts as follows;
\begin{align*}
\Omega_0 & :=\bigl\{ (\tau,k,\tau_1,k_1) \in (\mathbb{R} \times \dot{\mathbb{Z}}_{\lambda})^2~;~ 
|k|, |k_1| \lesssim 1 \bigr\}, \\
\Omega_1 & := \bigl\{ (\tau,k,\tau_1,k_1) \in (\mathbb{R} \times \dot{\mathbb{Z}}_{\lambda})^2 \setminus \Omega_0~;~ 
 |k_1| \sim |k-k_1| \gg |k| \geq 1 \bigr\}, \\
\Omega_2 & := \bigl\{ (\tau,k,\tau_1,k_1) \in (\mathbb{R} \times \dot{\mathbb{Z}}_{\lambda})^2 \setminus \Omega_0~;~ 
 |k_1| \sim |k-k_1| \gg |k|~\text{and}~ 1 \geq |k| \geq 1/\lambda \bigr\}, \\
\Omega_3 & := \bigl\{ (\tau,k,\tau_1,k_1) \in (\mathbb{R} \times \dot{\mathbb{Z}}_{\lambda})^2 \setminus \Omega_0~;~ 
 |k| \sim |k-k_1| \gg |k_1| \geq 1 \bigr\}, \\
\Omega_4& := \bigl\{ (\tau,k,\tau_1,k_1) \in (\mathbb{R} \times \dot{\mathbb{Z}}_{\lambda})^2 \setminus \Omega_0~;~ 
 |k| \sim |k-k_1| \gg |k_1 | ~\text{and}~1 \geq |k_1| \geq 1/\lambda \bigr\}, \\
\Omega_5 & := \bigl\{ (\tau,k,\tau_1,k_1) \in (\mathbb{R} \times \dot{\mathbb{Z}}_{\lambda})^2 \setminus \Omega_0~;~ 
 |k| \sim |k_1| \sim |k-k_1| \geq 1 \bigr\}.
 \end{align*}
Recall that $Z^s$ has the following properties;
\begin{align*} 
\| u \|_{X^{s,1/4}} \lesssim \| u \|_{Z^{s}} \lesssim \| u \|_{X^{s,3/4}}
~~ \text{and} ~~\| u \|_{X^{s,1/2}(D_1 \cup D_2)} \lesssim \| u \|_{Z^{s}(D_1 \cup D_2) }. 
\end{align*}
\underline{\bf Estimate in $\Omega_0$}

From the property of $Z^s$, we only estimate the norm $X^{s,3/4}$ of $\Lambda^{-1} \p_x (uv)$. 
From $|k|, |k_1|, |k-k_1| \lesssim 1$, we use the H\"{o}lder inequality and the Young inequality to have 
\begin{align*}
\| |k| \langle \tau-p_{\lambda} (k) \rangle^{-1/4} \widehat{u}* \widehat{v} \|_{l_k^2 L_{\tau}^2}
\lesssim & \| |k| \|_{l_k^2} \| \widehat{u}* \widehat{v} \|_{l_k^{\infty} L_{\tau}^2} \\
\lesssim & \| \widehat{u} \|_{l_k^2 L_{\tau}^2} \| \widehat{v} \|_{l_k^2 L_{\tau}^1},
\end{align*}
which is an appropriate bound. 

Here we put 
$L_{\max}= \max \{ |\tau-p_{\lambda}(k)|, |\tau_1-p_{\lambda}(k_1)|, | (\tau-\tau_1) -p_{\lambda}(k-k_1)|
\}$. In the remainder case, we often use the algebraic relation as follows; 
\begin{align} \label{alg}
L_{\max} \geq & 
\frac{1}{3} \Bigl| (\tau-p_{\lambda}(k))- (\tau_1-p_{\lambda}(k_1)) -\bigl\{ (\tau-\tau_1)-p_{\lambda}(k-k_1) \bigr\} \Bigr| \nonumber \\
\geq &
\frac{5}{6} \Bigl|k k_1 (k-k_1) \bigl\{ k^2+k_1^2+(k-k_1)^2+ \frac{6}{5} \beta
 \lambda^{-2} \bigr\} \Bigr| .
\end{align} 

\vspace{0.3em}

\noindent
(I) We prove the estimate for $\Omega_1$. We first decompose $\Omega_1$ into three parts as follows;
\begin{align*}
\Omega_{11}& := \bigl\{ (\tau,k,\tau_1,k_1) \in \Omega_1~; ~ |\tau-p_{\lambda}(k)|=L_{\max} \bigr\}, \\
\Omega_{12}& := \bigl\{ (\tau,k, \tau_1,k_1) \in \Omega_1~; ~ |\tau_1-p_{\lambda}(k_1)|=L_{\max} \bigr\}, \\
\Omega_{13}& := \bigl\{ (\tau,k, \tau_1,k_1) \in \Omega_1~; ~ |(\tau-\tau_1)-p_{\lambda}(k-k_1)|=L_{\max} \bigr\}. 
\end{align*}
The case $\Omega_{13}$ is identical to the case $\Omega_{12}$. So we omit this case. 
Note that $L_{\max} \gtrsim |k k_1^4|$ in $\Omega_1$ from (\ref{alg}). 

\vspace{0.3em}

\noindent
(Ia) In $\Omega_{11}$, 
$\widehat{u}* \widehat{v}$ is supported on $D_3$ from the definition. We use Lemma~\ref{lem_L_4} with $b'=1/4$ and 
$b=1/2$ to obtain
\begin{align*}
\| \langle k \rangle^{-s/2} \langle \tau-p_{\lambda} (k) \rangle^{s/2} \widehat{u}* \widehat{v} \|_{l_k^2 L_{\tau}^2}
 \lesssim \| J^s u J^s v \|_{L_{t,x}^2} \lesssim \| u \|_{X^{s,1/2}} \| v \|_{X^{s,1/4}},
\end{align*}
which implies the desired estimate except the case $\widehat{u}$ and $\widehat{v}$ are restricted to $D_3$. 
Next we consider the case both $\widehat{u}$ and $\widehat{v}$ are supported on $D_3$. We use H\"{o}lder's inequality 
and Young's inequality to have
\begin{align*}
& \| \langle k \rangle^{-s/2} \langle \tau-p_{\lambda}(k) \rangle^{s/2} \widehat{u} * \widehat{v} \|_{l_k^{2} L_{\tau}^2}
\lesssim \| \widehat{u} * (\langle k \rangle^{2s} \widehat{v} ) \|_{l_k^2 L_{\tau}^2} \\
& \hspace{0.3cm} \lesssim \| (\langle k \rangle^{-s/2-1} \langle \tau-p_{\lambda}(k) \rangle^{s/2+1} \widehat{u})* 
(\langle k \rangle^{-4} \widehat{v}) \|_{l_k^{2} L_{\tau}^2}
\lesssim \| u \|_{X^{-s/2-1,s/2+1}} \| \langle k \rangle^{-4} \widehat{v} \|_{l_k^1 L_{\tau}^1},
\end{align*}
which shows the required estimate since $\| \langle k \rangle^{-4} \widehat{v} \|_{l_k^1 L_{\tau}^1}
\lesssim \| v \|_{Y^s}$ by the Schwarz inequality. 
Moreover we estimate the norm $Y^s$ of $\Lambda^{-1} \p_x(uv)$. Following 
$|\tau-p_{\lambda}(k)| \gtrsim |k k_1^4|$, we use the H\"{o}lder inequality and the Young inequality to obtain
\begin{align*}
\| \langle k \rangle^{s+1} \langle \tau-p_{\lambda}(k) \rangle^{-1} \widehat{u}*\widehat{v} \|_{l_k^2 L_{\tau}^1} \lesssim 
\| (\langle k \rangle^{-2} \widehat{u}) *( \langle k \rangle^{-2} \widehat{v} ) \|_{l_k^{\infty} L_{\tau}^1} \lesssim \| u \|_{Y^s} \| v \|_{Y^s}. 
\end{align*}

\vspace{0.3em}

\noindent
(Ib) We show the estimate for $\Omega_{12}$. Consider three subregions
\begin{align*}
\Omega_{12a}& := \bigl\{ (\tau,k, \tau_1,k_1) \in \Omega_{12}~; ~ |\tau_1-p_{\lambda}(k_1)| \sim |k k_1^4|  \bigr\}, \\
\Omega_{12b}& := \bigl\{ (\tau,k, \tau_1,k_1) \in \Omega_{12}~; ~ |k_1|^5 \gtrsim |\tau_1-p_{\lambda}(k_1)| \gg |k k_1^4| \bigr\}, \\
\Omega_{12c}& := \bigl\{ (\tau,k, \tau_1,k_1) \in \Omega_{12}~; ~ |\tau_1-p_{\lambda}(k_1)| \gtrsim |k_1|^5 \bigr\}, 
\end{align*}

In $\Omega_{12a}$, $\widehat{u}$ is restricted to $D_2$. Then we use Lemma~\ref{lem_L_4} with $b'=1/4$ and $b=1/2$ to obtain 
\begin{align*}
\| \langle k \rangle^s \langle \tau-p_{\lambda}(k) \rangle^{-1/4} \widehat{u}*\widehat{v} \|_{l_k^2 L_{\tau}^2} 
\lesssim & \| (J^{-3s-1} \Lambda^{s+1} u) (J^{-s-3} v )\|_{X^{0,-1/4}} \\
\lesssim & \| u \|_{X^{-3s-1,s+1}} \| v \|_{X^{s,1/2}}.
\end{align*}

In $\Omega_{12b}$ and $\Omega_{12c}$, either $|\tau_1-p_{\lambda}(k_1)| \sim |\tau-p_{\lambda}(k) |$ or 
$|\tau_1-p_{\lambda}(k_1)| \sim |(\tau-\tau_1)- p_{\lambda}(k-k_1)| $ happens. 
The former case is almost identical to the case (Ia).  
So we only consider the latter case. 

We prove the estimate for $\Omega_{12b}$. 
In this case, we may assume that both $\widehat{u}$ and $\widehat{v}$ are supported on 
$D_2$ and $|\tau-p_{\lambda} (k)| \lesssim |k_1|^5$. 
From $\langle k_1 \rangle^{3s+1} \langle \tau_1-p_{\lambda} (k_1) \rangle^{-s-1} \lesssim 
\langle k_1 \rangle^{-2s-4}$, we use the H\"{o}lder inequality and 
the Young inequality to have 
\begin{align*}
& \| \langle k \rangle^{s+1} \langle \tau-p_{\lambda}(k ) \rangle^{-1/4}
\widehat{u}* \widehat{v} \|_{l_k^2 L_{\tau}^2} 
\lesssim  \| \langle k \rangle^{s+3/2} \langle \tau-p_{\lambda}(k) 
\rangle^{1/4} \widehat{u}* \widehat{v} \|_{l_k^{\infty} L_{\tau}^{\infty}} \\
& \hspace{0.3cm} \lesssim
 \| (\langle k \rangle^{s+11/4} \widehat{u})* \widehat{v} \|_{l_k^{\infty} L_{\tau}^{\infty} }
 \lesssim \| \langle k \rangle^{-3s-21/4} \|_{l_k^{\infty}} \| u \|_{X^{-3s-1,s+1}} \| v \|_{X^{-3s-1,s+1}},  
\end{align*}
which is an appropriate bound. 

\vspace{0.3em}

From the similar argument to above,  we obtain the desired estimate for $\Omega_{12c}$.

\vspace{0.5em}

\noindent
\underline{\bf Estimate for $\Omega_2$}\\

\noindent
(II) We divide $\Omega_{2}$ into three parts as follows;
\begin{align*}
\Omega_{21} & := \bigl\{ (\tau,k,\tau_1,k_1) \in \Omega_2~;~ 
L_{\max}= |\tau-p_{\lambda}(k)| \bigr\}, \\
\Omega_{22} & := \bigl\{ (\tau,k,\tau_1,k_1) \in \Omega_2~;~ 
L_{\max}= |\tau_1-p_{\lambda}(k_1)| \bigr\}, \\
\Omega_{23} & := \bigl\{ (\tau,k,\tau_1,k_1) \in \Omega_2~;~ 
L_{\max}= |(\tau-\tau_1)-p_{\lambda}(k-k_1)| \bigr\}.
\end{align*}
We omit the estimate for $\Omega_{23}$ because this case is identical to 
$\Omega_{22}$. Note that $\widehat{u}* \widehat{v}$ is supported on 
$D_3$ in $\Omega_2$. 
When $|k| \lesssim |k_1|^{-4}$, H\"{o}lder's and Young's inequalities show 
\begin{align*}
 \| |k| \langle \tau-p_{\lambda}(k) \rangle^{-1/4} \widehat{u} * \widehat{v} \|_{l_k^2 L_{\tau}^2}
& \lesssim \| (\langle k\rangle^{-3} \widehat{u})* (\langle k \rangle^{-3} \widehat{v}) \|_{l_k^{\infty}L_{\tau}^2} \\
&  \lesssim \| u \|_{X^{-3,0}} \| v \|_{Y^{-3}},
\end{align*}
which implies the desired estimate. So we only deal with the case 
$|k_1|^{-4} \lesssim |k| \leq 1$. 

\vspace{0.3em}

\noindent
(IIa) We prove the estimate for $\Omega_{21}$. We use the H\"{o}lder inequality and the Young inequality to obtain
\begin{align*}
& \| |k| \langle \tau-p_{\lambda}(k) \rangle^{s/2} \widehat{u}* \widehat{v} \|_{l_k^2 L_{\tau}^2} 
\lesssim  \| |k|^{1+s/2}  (\langle k \rangle^s \widehat{u}) * (\langle k \rangle^{s} \widehat{v}) \|_{l_k^{2} L_{\tau}^2} \\
& \hspace{0.8cm} \lesssim \| (\langle k \rangle^{s} \widehat{u}) * (\langle k \rangle^s  \widehat{v} ) \|_{l_k^{\infty} L_{\tau}^2} \lesssim 
\| u \|_{X^{s,0}} \| v \|_{Y^s}.
\end{align*}
Next we estimate the $Y^s$ norm of $\Lambda^{-1} \p_x (uv)$. Combining H\"{o}lder's and Young's inequalities, we have   
\begin{align*}
\| |k| \langle \tau-p_{\lambda}(k) \rangle^{-1} \widehat{u} * \widehat{v}
\|_{l_k^2 L_{\tau}^1} 
\lesssim & \| (\langle k \rangle^{-2} \widehat{u}) * (\langle k \rangle^{-2} \widehat{v}) \|_{l_{k}^{\infty} L_{\tau}^1} \\
\lesssim & \| \langle k \rangle^{-2} \widehat{u} \|_{l_k^2 L_{\tau}^1}
\| \langle k \rangle^{-2} \widehat{v} \|_{l_k^2 L_{\tau}^1},
\end{align*}
which is an appropriate bound.

\noindent
(IIb) We consider the estimate for $\Omega_{22}$. Following $\| u \|_{Y^s} \lesssim \| u \|_{X^{s,1/2+}}$, 
it suffices to show 
\begin{align} \label{BE_3}
\| |k| \langle \tau-p_{\lambda}(k) \rangle^{-1/2+} \widehat{u}* 
\widehat{v} \|_{l_k^2 L_{\tau}^2} \lesssim \| u \|_{Z^s} \| v \|_{Z^s}
\end{align}
in $\Omega_{22}$. We consider three subregions as follows;
\begin{align*}
\Omega_{22a} & := \bigl\{ (\tau,k,\tau_1,k_1) \in \Omega_{22}~;~ 
|k k_1|^4 \lesssim |\tau_1-p_{\lambda}(k_1)| \lesssim |k_1|^4 \bigr\}, \\
\Omega_{22b} & := \bigl\{ (\tau,k,\tau_1,k_1) \in \Omega_{22}~;~ 
|k_1^4| \lesssim |\tau_1-p_{\lambda} (k_1)| \lesssim |k_1|^5 \bigr\}, \\
\Omega_{22c} & := \bigl\{ (\tau,k,\tau_1,k_1) \in \Omega_{22}~;~ 
|k_1|^5 \lesssim |\tau_1-p_{\lambda}(k_1)|  \bigr\}.
\end{align*}
In $\Omega_{22a}$, $\widehat{u}$ is restricted to $D_1$. We use the H\"{o}lder inequality and the Young inequality to obtain 
\begin{align*}
& \| |k| \langle \tau-p_{\lambda}(k) \rangle^{-1/2+} \widehat{u}* \widehat{v} \|_{l_k^2 L_{\tau}^2}
\lesssim \| |k|^{1/4} 
(\langle k \rangle^s \langle \tau-p_{\lambda}(k) \rangle^{3/4} \widehat{u} )
* (\langle k \rangle^{-s-3} \widehat{v}) \|_{l_k^2 L_{\tau}^2} \\
& \hspace{0.3cm} \lesssim \| (\langle k \rangle^{s} 
\langle \tau-p_{\lambda}(k) \rangle^{3/4} \widehat{u}) * (\langle k \rangle^s 
\widehat{v})  \|_{l_k^{\infty} L_{\tau}^2 } 
\lesssim \| u \|_{X^{s,3/4} } \| v \|_{Y^s}.
\end{align*}

We consider the estimate for $\Omega_{22b}$ and $\Omega_{22c}$. From the estimate for $\Omega_{12}$, 
we immediately obtain (\ref{BE_3}) in the case 
$|\tau-p_{\lambda} (k)|  \sim |\tau_1-p_{\lambda} (k_1) |$. So we only deal with the case 
$|\tau_1-p_{\lambda}(k_1)| \sim |(\tau-\tau_1) -p_{\lambda}(k-k_1)|$. 
In $\Omega_{22b}$, both $\widehat{u}$ and $\widehat{v}$ are restricted to $D_2$. 
We use Lemma~\ref{lem_L_4} with $b'=1/2-$ and $b=s/2+1$ to have 
\begin{align*}
\| |k| \langle \tau-p_{\lambda}(k) \rangle^{-1/2+} \widehat{u}* \widehat{v} \|_{l_k^2 L_{\tau}^2} 
\lesssim & \| (J^{-3s-1} \Lambda^{s+1} u) (J^{-2s-4} v ) \|_{X^{0, -1/2+}} \\
& \lesssim \| u \|_{X^{-3s-1,s+1}} \| v \|_{X^{-2s-4,s/2+1}},
\end{align*}
which shows the desired estimate since $\| v \|_{X^{-2s-4,s/2+1}} \lesssim 
\| v \|_{X^{-3s-1,s+1}}$ in $D_{2}$ for $s \geq -3/2$. 

The case $\Omega_{22c}$ is almost identical to the above case.

\vspace{0.5em}

\noindent
\underline{\bf Estimate for $\Omega_{3}$} \\

\noindent
(III) From the algebraic relation (\ref{alg}), $L_{\max} \gtrsim |k_1 k^4|$. 
We decompose $\Omega_3$ into three parts as follows;
\begin{align*}
\Omega_{31} & := \bigl\{ (\tau,k,\tau_1,k_1) \in \Omega_3~;~ 
L_{\max}= |\tau_1-p_{\lambda}(k_1)| \bigr\}, \\
\Omega_{32} & := \bigl\{ (\tau,k,\tau_1,k_1) \in \Omega_3~;~ 
L_{\max}= |\tau-p_{\lambda}(k)| \bigr\}, \\
\Omega_{33} & := \bigl\{ (\tau,k,\tau_1,k_1) \in \Omega_3~;~ 
L_{\max}= |\tau_2-p_{\lambda}(k_2) | \bigr\}.
\end{align*}

\noindent
(IIIa) Firstly, we consider the case $\widehat{u} * \widehat{v}$ is supported on $D_3$. In this case, 
either $|\tau-p_{\lambda}(k)| \sim |\tau_1-p_{\lambda} (k_1) | \gtrsim |k|^5$ or 
$|\tau-p_{\lambda} (k)| \sim |\tau_2 -p_{\lambda} (k_2)| \gtrsim |k|^5$ holds. 
In the former case, $\widehat{u}$ are supported on $D_3$. We use the Young inequality to obtain 
\begin{align*}
\| \langle k \rangle^{-s/2} \langle \tau-p_{\lambda}(k) \rangle^{s/2} \widehat{u} * \widehat{v} \|_{l_k^2 L_{\tau}^2}
& \lesssim \| (J^{-s/2-1} \Lambda^{s/2+1} u) (J^{-4} v) \|_{L_{t,x}^2} \\
&  \lesssim \| u \|_{X^{-s/2-1,s/2+1}} \| \langle k \rangle^{-4} v \|_{l_k^{1} L_{\tau}^1},  
\end{align*}
which is bounded by $\| u \|_{X^{-s/2-1,s/2+1}} \| v \|_{Y^s}$ from the Schwarz inequality. 
The latter case is almost identical to the above case.  

Secondly, we deal with the case $\widehat{v}$ is supported on $D_3$. 
From (\ref{alg}), $|\tau_2 -p_{\lambda} (k_2)| \sim |\tau-p_{\lambda} (k)| \gtrsim |k|^5$ or 
$|\tau_2-p_{\lambda} (k_2) | \sim |\tau_1-p_{\lambda} (k_1)| \gtrsim |k|^5$ holds. 
In the former case, we have already proven (\ref{BE_T2}). So we consider the latter case.  
We may assume that $\widehat{u}$ is restricted to $D_3$ 
and $|\tau-p_{\lambda} (k)| \lesssim |k|^5$. We use the H\"{o}lder inequality and the Young inequality to have 
\begin{align*}
& \| \langle k \rangle^{s+1} \langle \tau-p_{\lambda}(k) \rangle^{-1/4} \widehat{u} * \widehat{v} \|_{l_k^2 L_{\tau}^2}
\lesssim \| \langle k \rangle^{s+3/2} \langle \tau-p_{\lambda} (k) \rangle^{1/4} \widehat{u}* 
\widehat{v} \|_{l_k^{\infty} L_{\tau}^{\infty}} \\
& ~~ \lesssim \| \langle k \rangle^{-3s-21/4} \|_{l_k^{\infty}} 
\| u \|_{X^{-s/2-1,s/2+1}} \| v \|_{X^{-s/2-1,s/2+1}}, 
\end{align*}
which shows the required estimate. Therefore we only deal with the case 
both $\widehat{u}* \widehat{v}$ and $\widehat{v}$ are supported on $D_1 \cup D_2$. 

\vspace{0.3em}
\noindent
(IIIb) We estimate (\ref{BE_T2}) for $\Omega_{31}$. In $\Omega_{31}$, $\widehat{u}$ is supported on $D_3$ from 
$L_{\max} \gtrsim |k_1 k^4|$. 
We use Lemma~\ref{lem_L_4} with $b'=1/4$ and $b=1/2$ to obtain 
\begin{align*}
\| \langle k \rangle^{s+1} \langle \tau-p_{\lambda}(k) \rangle^{-1/4} \widehat{u} * \widehat{v} \|_{l_k^2 L_{\tau}^2}
& \lesssim \| (J^{-s/2-1} \Lambda^{s/2+1} u) (J^{-s-3} v) \|_{X^{0,-1/4}} \\
&  \lesssim \| u \|_{X^{-s/2-1,s/2+1}} \| v \|_{X^{s,1/2}},
\end{align*}
which is an appropriate bound. 

\vspace{0.3em}

\noindent
(IIIc) 
We consider the estimate for $\Omega_{32}$. From (\ref{alg}), 
$\widehat{u}* \widehat{v}$ is supported on $D_2$. 
We use Lemma~\ref{lem_L_4} with $b=1/4$ and $b'=1/2$ to have 
\begin{align*} 
\| \langle k \rangle^{-3s} \langle \tau-p_{\lambda}(k) \rangle^{s} 
\widehat{u} * \widehat{v} \|_{l_k^2 L_{\tau}^2} 
& \lesssim 
\| (J^s u) (J^s v) \|_{L_{t,x}^2} \\
& \lesssim \| u \|_{X^{s,1/4}} \| v \|_{X^{s,1/2}},
\end{align*}
which is an appropriate bound.

Next, we estimate the $Y^s$ norm of $\Lambda^{-1} \p_{x} (uv)$. 
The Young inequality shows 
\begin{align*}
 \| \langle k \rangle^{s+1} \langle \tau-p_{\lambda}(k) \rangle^{-1} \widehat{u} * \widehat{v} \|_{l_k^2 L_{\tau}^1}
& \lesssim \| (\langle k \rangle^{-4}  \widehat{u})* (\langle k \rangle^{s} \widehat{v}) \|_{l_k^2 L_{\tau}^1} \\
&  \lesssim \| \langle k \rangle^{-4} \widehat{u} \|_{l_k^1 L_{\tau}^2 } \| v \|_{Y^s},
\end{align*}
which implies the desired estimate from the Schwarz inequality. 

\vspace{0.3em}

\noindent
(IIId) We consider the estimate for $\Omega_{33}$. 
From (\ref{alg}), we may assume that $\widehat{v}$ is supported on $D_2$ and 
$|\tau-p_{\lambda} (k)| \lesssim |k|^5$.  
In the case $\widehat{u}$ is supported on $D_1 \cup D_2$, 
we use Lemma~\ref{lem_L_4} with $b'=1/4$ and $b=1/2$ to obtain
\begin{align*}
\| \langle k \rangle^{s+1} \langle \tau-p_{\lambda} (k) \rangle^{-1/4}
\widehat{u} * \widehat{v} \|_{l_k^2 L_{\tau}^2} 
& \lesssim \| (J^{-s-3} u) (J^{-3s-1} \Lambda^{s+1} v ) \|_{X^{0,-1/4}} \\
& \lesssim \| u \|_{X^{s,1/2}} \| v \|_{X^{-3s-1,s+1}}.
\end{align*}
On the other hand, we consider the case $\widehat{u}$ is supported on $D_3$. 
Then we use Lemma~\ref{lem_L_4} with $b'=-s/2-1/4$ and $b=s/2+1$ to have 
\begin{align*}
& \| \langle k \rangle^{s+1} \langle \tau-p_{\lambda} (k) \rangle^{-1/4}
\widehat{u} * \widehat{v} \|_{l_k^2 L_{\tau}^2} 
\lesssim \| \langle k \rangle^{-3s/2-3/2} 
\langle \tau-p_{\lambda} (k) \rangle^{s/2+1/4}
\widehat{u} * \widehat{v} \|_{l_k^2 L_{\tau}^2} \\
& \lesssim 
 \| (J^{-7s/2-11/2} u) (J^{-3s-1 } \Lambda^{s+1} v) \|_{X^{0,s/2+1/2}}
\lesssim \| u \|_{X^{-7s/2-11/2, s/2+1}} \| v \|_{X^{-3s-1,s+1}},
\end{align*}
which shows the desired estimate since $-7s/2 -11/2 \leq -s/2-1$ for $s \geq -3/2$.

\vspace{0.5em}

\noindent
\underline{\bf Estimate for $\Omega_{4}$}\\

\noindent
(VI) In $\Omega_4$, $\widehat{u}$ is restricted to $D_3$. 
We divide $\Omega_4$ into three parts as follows;
\begin{align*}
\Omega_{41}:= & \bigl\{ (\tau,k, \tau_1, k_1) \in \Omega_{4}~; ~
L_{\max}=  |\tau_1-p_{\lambda} (k_1)|  \bigr\}, \\
\Omega_{42}:= & \bigl\{ (\tau,k, \tau_1, k_1) \in \Omega_{4}~; ~
L_{\max}=  |\tau-p_{\lambda} (k)|  \bigr\}, \\
\Omega_{43}:= & \bigl\{ (\tau,k, \tau_1, k_1) \in \Omega_{4}~; ~
L_{\max}=  |\tau_2-p_{\lambda} (k_2)|  \bigr\}.
\end{align*}
When $|k_1| \lesssim |k|^{-4}$, we easily obtain the desired estimate combining H\"{o}lder's and Young's inequalities. 
So we only deal with the case $|k|^{-4} \lesssim |k_1| \leq 1$.  

\vspace{0.3em}

\noindent
(VIa) In $\Omega_{41}$, we use the H\"{o}lder inequality and the Young inequality to obtain 
\begin{align*}
& \| \langle k \rangle^{s+1} \langle \tau-p_{\lambda}(k) \rangle^{-1/4} \widehat{u}* \widehat{v} \|_{l_k^2 L_{\tau}^2} 
\lesssim \| (|k|^{-1/4} \langle \tau-p_{\lambda}(k) \rangle^{1/4} \widehat{u} ) * ( \langle k \rangle^{s} \widehat{v} ) \|_{l_k^2 L_{\tau}^2} \\
&~~\lesssim \| |k|^{-1/4} \langle \tau-p_{\lambda}(k) \rangle^{1/4} \widehat{u} \|_{l_k^{1} L_{\tau}^2} \| v \|_{Y^s}
 \lesssim  \| u \|_{X^{0,1/4}} \| v \|_{Y^s}. 
\end{align*}

\noindent
(VIb) In $\Omega_{42}$, we use Young's inequality to have 
\begin{align*}
\| \langle k \rangle^{s+1} \langle \tau-p_{\lambda}(k) \rangle^{-1/4} \widehat{u} * \widehat{v} \|_{l_k^2 L_{\tau}^2}
& \lesssim \| (|k|^{-1/4}  \widehat{u})* (\langle k \rangle^{s} \widehat{v}) \|_{l_{k}^2 L_{\tau}^2} \\
& \lesssim \| |k|^{-1/4} \widehat{u} \|_{l_k^1 L_{\tau}^2} \| v \|_{Y^{s}}, 
\end{align*}
which is an appropriate bound from Schwarz's inequality.  

\noindent
(VIc) From $\langle k_2 \rangle^{-s} \langle \tau_2-p_{\lambda}(k_2) \rangle^{-1/4} \lesssim |k_1|^{-1/4} \langle k_2 \rangle^{-s-1}$ in $\Omega_{43}$, 
we use the H\"{o}lder inequality and the Young inequality to have 
\begin{align*}
& \| \langle k \rangle^{s+1} \langle \tau-p_{\lambda}(k) \rangle^{-1/4} \widehat{u} * \widehat{v} \|_{l_k^2 L_{\tau}^2} 
\lesssim  \| ( |k|^{-1/4} \widehat{u} ) * (\langle k \rangle^s  \langle \tau-p_{\lambda}(k) \rangle^{1/4} \widehat{v}) \|_{l_k^{2} L_{\tau}^2} \\
& \hspace{0.3cm} \lesssim  \| |k|^{-1/4} \widehat{u} \|_{l_k^1 L_{\tau}^1} 
\| v \|_{X^{s,1/4}} \lesssim \| u \|_{Y^s} \| v \|_{X^{s,1/4}}.
\end{align*}

\vspace{0.5em}

\noindent
\underline{\bf Estimate for $\Omega_{5}$}\\ 

We decompose $\Omega_{5}$ into three parts as follows;
\begin{align*}
\Omega_{51}:= & \bigl\{ (\tau,k ,\tau_1, k_1) \in \Omega_{5} ~: ~ 
L_{\max}= |\tau-p_{\lambda}(k)| \bigr\}, \\
\Omega_{52}:= & \bigl\{ (\tau,k ,\tau_1, k_1) \in \Omega_{5} ~: ~ 
L_{\max}= |\tau_1-p_{\lambda}(k_1)| \bigr\}, \\
\Omega_{53}:= & \bigl\{ (\tau,k ,\tau_1, k_1) \in \Omega_{5} ~: ~ 
L_{\max}= |\tau_2-p_{\lambda}(k_2)| \bigr\}.
\end{align*}

\vspace{0.3em}

\noindent
(Va) In $\Omega_{51}$, $\widehat{u} * \widehat{v}$ is supported on $D_3$. We divide this region into 
\begin{align*}
\Omega_{51a}:= & \bigl\{ (\tau,k ,\tau_1, k_1) \in \Omega_{51} ~: ~ 
 |\tau-p_{\lambda}(k)| \sim |k^5| \bigr\}, \\
\Omega_{51b}:= & \Omega_{51} \setminus \Omega_{51a}. 
\end{align*}
In $\Omega_{51a}$, both $\widehat{u}$ and $\widehat{v}$ are supported on $D_1 \cup D_2$ from (\ref{alg}). 
We use Lemma~\ref{lem_L_4} with $b'=1/4$ and $b=1/2$ to have 
\begin{align*}
\| \langle k \rangle^{-s/2} \langle \tau-p_{\lambda}(k) \rangle^{s/2} 
\widehat{u}* \widehat{v} \|_{l_k^2 L_{\tau}^2} 
& \lesssim  \|(J^s u) (J^s v ) \|_{L_{t,x}^2} \\
& \lesssim \| u \|_{X^{s,1/2}} \| v \|_{X^{s,1/4}}. 
\end{align*}

In $\Omega_{51b}$, either $|\tau-p_{\lambda} (k)| \sim |\tau_1-p_{\lambda} (k_1)| $ or $|\tau-p_{\lambda} (k)| \sim |\tau_2-p_{\lambda} (k_2)|$ holds. 
Following the similar argument to the case $\Omega_{11}$, we obtain the desired estimate in $\Omega_{51b}$. 

\vspace{0.5em}

\noindent
(Vb) We consider the estimate for $\Omega_{52}$. From (\ref{alg}), $\widehat{u}$ is supported on $D_3$. 
We divide $\Omega_{52}$ into 
\begin{align*}
\Omega_{52a}:= & \bigl\{ (\tau,k ,\tau_1, k_1) \in \Omega_{52} ~: ~ 
 |\tau_1-p_{\lambda}(k_1)| \sim |k_1^5| \bigr\}, \\
\Omega_{52b}:= & \Omega_{52} \setminus \Omega_{52a}. 
\end{align*}
In $\Omega_{52a}$, $\widehat{u} * \widehat{v}$ and $\widehat{v}$ are supported on $D_1 \cup D_2$ under this assumption. 
Then we use Lemma~\ref{lem_L_4} with $b'=1/4$ and $b=1/2$ to have 
\begin{align*}
\| \langle k \rangle^{s+1} \langle \tau-p_{\lambda}(k) \rangle^{-1/4} 
\widehat{u}* \widehat{v} \|_{l_k^2 L_{\tau}^2} 
& \lesssim  \|(J^{-s/2-1} \Lambda^{s/2+1} u) (J^{-s-3} v ) \|_{X^{0,-1/4}} \\
& \lesssim \| u \|_{X^{-s/2-1,s/2+1}} \| v \|_{X^{s,1/2}}, 
\end{align*}
which is an appropriate bound. 

In $\Omega_{52b}$, either $|\tau_1-p_{\lambda} (k_1)| \sim |\tau-p_{\lambda} (k)|$ or $|\tau_1 -p_{\lambda}(k_1) | \sim |\tau_2-p_{\lambda}(k_2) |$ holds. 
These cases are almost identical to the case (IIIa). 

In the same manner as above, we obtain the desired estimate in $\Omega_{53}$ 
by symmetry. 
\end{proof}

\section{Proof of the local well-posedness}
In this section, we give the proof of Theorem~\ref{LWP_TK} by the 
iteration method. 
Here we put $U(t):=\mathcal{F}_{k}^{-1} \exp(i p(k) t) \mathcal{F}_x$ and $p(k):=k^5 +\beta k^3$. 
We obtain the local well-posedness result in the following sense.
\begin{prop} \label{prop_well}
Let $-3/2 \leq s \leq -1$ and $r>1$. For any $u_0 \in B_r(\dot{H}^{s})$, there exist $T=T(r)>0$ and a unique solution 
$u \in Z^{s}([0,T])$ satisfying the following integral form for (\ref{KT_1});
\begin{align} \label{integral-1}
u(t)=  U(t) u_0 -\int_{0}^t U(t-s) \p_x (u (s))^2 ds.
\end{align}
Moreover the data-to-solution map, 
$B_r(\dot{H}^{s}) \ni u_0 \mapsto u \in  Z^{s} ([0,T])$, is locally Lipschitz continuous. 
\end{prop}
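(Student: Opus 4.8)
The plan is to run the standard contraction-mapping (Fourier restriction norm) argument in the space $Z^s([0,T])$, using the linear estimates in Propositions~\ref{prop_linear1} and \ref{prop_linear2} together with the bilinear estimate (\ref{BE_T2}) of Proposition~\ref{prop_BE_2}. First I would fix $-3/2 \le s \le -1$, $r > 1$, and $u_0 \in B_r(\dot H^s)$, and define the map
\begin{align*}
\Phi_{u_0}(u)(t) := U(t) u_0 - \int_0^t U(t-s)\,\p_x\bigl(u(s)\bigr)^2\,ds
\end{align*}
on a ball in $Z^s([0,T])$. Since the $Z^s$-norm as defined controls the full (non-rescaled) problem only after the rescaling reductions described in the introduction, I would first reduce to the rescaled equation (\ref{KT_2}) with data of size $\lesssim \lambda^{-7/2-s} r$, choose $\lambda = \lambda(r) \gg 1$ so that this rescaled data is as small as we like, and then solve (\ref{KT_2}) on the fixed time interval $[0,1]$ (say), translating back at the end via $T \sim \lambda^{-5}$; the constants in (\ref{BE_T2}) are $\lambda$-independent, which is exactly what makes this work.

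The core estimates are then routine: by Proposition~\ref{prop_linear1}, $\|U_\lambda(t)u_{0,\lambda}\|_{Z^s([0,1])} \lesssim \|u_{0,\lambda}\|_{\dot H^s(\mathbb{T}_\lambda)} =: \e$, and by Proposition~\ref{prop_linear2} combined with (\ref{BE_T2}), the Duhamel term obeys
\begin{align*}
\Bigl\| \int_0^t U_\lambda(t-s)\,\p_x\bigl(u(s)v(s)\bigr)\,ds \Bigr\|_{Z^s([0,1])} \lesssim \|u\|_{Z^s([0,1])}\|v\|_{Z^s([0,1])}.
\end{align*}
Hence on the ball $\{\|u\|_{Z^s([0,1])} \le 2C\e\}$ one has $\|\Phi(u)\|_{Z^s} \le C\e + C(2C\e)^2 \le 2C\e$ provided $\e$ (equivalently $\lambda$) is chosen so that $4C^2\e \le 1$, and similarly $\|\Phi(u)-\Phi(v)\|_{Z^s} \le 4C^2\e\,\|u-v\|_{Z^s} \le \tfrac12\|u-v\|_{Z^s}$ using the bilinearity and the polarization $uv - \tilde u\tilde v = (u-\tilde u)v + \tilde u(v-\tilde v)$. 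The contraction mapping theorem then produces a unique fixed point in this ball, and the embedding $Z^s([0,1]) \hookrightarrow C([0,1];\dot H^s(\mathbb{T}_\lambda))$ noted after the definition of $Z^s(I)$ gives continuity in time. Uniqueness in the full space $Z^s([0,T])$ (not just the ball) follows by the usual argument of shrinking the time interval and iterating, again exploiting the bilinear estimate. Lipschitz dependence on $u_0$ is read off from the fixed-point identity: $u - \tilde u = U(t)(u_0-\tilde u_0) - \int_0^t U(t-s)\p_x\bigl((u+\tilde u)(u-\tilde u)\bigr)ds$, so $\|u-\tilde u\|_{Z^s} \lesssim \|u_0-\tilde u_0\|_{\dot H^s} + C\e\|u-\tilde u\|_{Z^s}$, which absorbs the last term and yields local Lipschitz continuity.

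I would finish by undoing the scaling: if $u_\lambda$ solves (\ref{KT_2}) on $[0,1]$ then $u(t,x) = \lambda^4 u_\lambda(\lambda^5 t, \lambda x)$ solves (\ref{KT_1}) on $[0,T]$ with $T = \lambda^{-5}$, and since $\lambda = \lambda(r)$ was fixed in terms of $r$ alone, $T = T(r) > 0$; the norm of $u$ in $Z^s([0,T])$ for the original (unrescaled) spaces is controlled because the map between $Z^s(\mathbb{T})$ and $Z^s(\mathbb{T}_\lambda)$ norms is bounded for $\lambda \ge 1$ and $s < 0$ as in (\ref{in_sm}).

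The main obstacle is not the abstract contraction — that is entirely standard once one has (\ref{BE_T2}) and the linear estimates — but rather the bookkeeping of the rescaling: one must check that solving the rescaled problem on a $\lambda$-independent interval genuinely yields a positive existence time for the original problem that depends only on $r$ (hence on $s$ and $\|u_0\|_{\dot H^s}$), and that the uniqueness and Lipschitz statements survive the translation between $\mathbb{T}$ and $\mathbb{T}_\lambda$ norms. The $\lambda$-independence of $C$ in Proposition~\ref{prop_BE_2} is precisely the ingredient that makes all of this go through cleanly, so the proof essentially amounts to citing that proposition and the two linear propositions and assembling the pieces.
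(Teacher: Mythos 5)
Your proposal follows essentially the same route as the paper: rescale to (\ref{KT_2}) so the data has size $\lesssim \lambda^{-2}r$, run the contraction on a small ball in $Z^s([0,1])$ using Propositions~\ref{prop_linear1}, \ref{prop_linear2} and the $\lambda$-independent bilinear estimate (\ref{BE_T2}), and undo the scaling to obtain $T \sim \lambda^{-5} \sim r^{-5/2}$. The one place you are lighter than the paper is uniqueness in all of $Z^{s}([0,T])$ rather than merely in the contraction ball: the ``shrink the interval and iterate'' heuristic is delicate for restriction-norm spaces (the restricted norm need not become small as the interval shrinks), and the paper instead invokes the Muramatu--Taoka argument, referring to \cite{TK_K} for the details.
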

\begin{proof}
We first prove the existence of the solution by the fixed point argument. Here $\lambda$ is a 
sufficiently large number determined later. 
For any $u_0 \in B_r (\dot{H}^s)$, from (\ref{in_sm}),  
$\| u_{0,\lambda} \|_{\dot{H}^s} \leq \lambda^{-2} r $ when $-3/2 \leq s <0$. 
Therefore we prove, for any $u_{0,\lambda} \in B_{\lambda^{-2} r}(\dot{H}^s)$, there exists 
$u_{\lambda} \in Z^s([0,1])$ satisfying 
\begin{align} \label{integral-2}
M[u_{\lambda} ] (t) =u_{\lambda} (t), \hspace{0.3em} 
M[u_{\lambda}] (t)=U_{\lambda} (t) u_{0,\lambda}-\int_0^t 
U_{\lambda} (t-s) \p_x (u_{\lambda}(s))^2 ds. 
\end{align}
Following Propositions~\ref{prop_BE_2} and \ref{prop_linear2}, 
we obtain the bilinear estimate as follows;
\begin{align} \label{BE_time}
\Bigl\| \int_0^t U_{\lambda} (t-s) \p_x (u_{\lambda} (s) v_{\lambda} (s)) ds \Bigr\|_{Z^s([0,1])} 
\leq C_1 \| u_{\lambda} \|_{Z^s([0,1])} \| v_{\lambda} \|_{Z^s([0,1])},
\end{align}
for some constant $C_1>0$. 
From Proposition~\ref{prop_linear1} and (\ref{BE_time}), we have 
\begin{align*}
\| M [u_{\lambda}] \|_{Z^s ([0,1])} \leq C_1 \bigl( \| u_{0,\lambda} \|_{\dot{H}^s}+
\| u_{\lambda} \|_{Z^s([0,1])}^2 \bigr). 
\end{align*}
Here we choose $\lambda^2 \geq 8 C_1^2 r$ so that 
$M$ is a map from $B_{2C_1 \lambda^{-2} r} (Z^s ([0,1]))$ to itself. 
In the same manner as above, we obtain
 \begin{align*}
 \| M [u_{\lambda}]  -M [v_{\lambda}] \|_{Z^s ([0,1])} & \leq C_1  
\| u_{\lambda} +  v_{\lambda} \|_{Z^s ([0,1])} 
\| u_{\lambda}-v_{\lambda} \|_{Z^s ([0,1])}  \\
 & \leq 4 \lambda^{-2} C_1^2 r \| u_{\lambda} -v_{\lambda} \|_{Z^s ([0,1])} 
 \leq \frac{1}{2} \| u_{\lambda} -v_{\lambda} \|_{Z^s ([0,1])}, 
\end{align*}
which implies that $M$ is a contraction map on $B_{2 C_1 \lambda^{-2} r} (Z^s([0,1]))$. 
From the fixed point argument, 
we construct the solution to (\ref{integral-2}) on $[0,1]$. Here we put 
$u(t,x):= \lambda^{4} u_{\lambda} (\lambda^5 t, \lambda x)$. Then $u$ solves (\ref{integral-1}) 
on $[0,T]$ where the lifetime $T$ satisfies $T \sim \lambda^{-5} \sim r^{-5/2}$. Moreover, following the standard 
argument, we show that the data-to-solution map is locally Lipschitz continuous. 

Moreover uniqueness can be extended to the whole $Z^s([0,T])$. 
This proof is based on Muramatu and Taoka's work \cite{MT}. 
For the details, see \cite{TK_K}.
\end{proof}


\section{Proof of the global well-posedness}

 In this section, we extend the local solution obtained above globally in time by the I-method. 
When $s \geq -1$, 
Hirayama \cite{Hi} obtained LWP for (\ref{KT_1}) in the function space $W^s([0,T])$ equipped with the norm 
\begin{align*}
\| u \|_{W^s([0,T])}:= \| u \|_{X^{s,1/2}([0,T])}+ \| u \|_{Y^{s}([0,T])}.
\end{align*}
These local-in-time solutions are shown to exist on an arbitrary time interval for $0> s \geq -1$.   
Note that $s=-1$ is optimal in such sense that the bilinear estimate in the standard Bourgain space 
fails for $s<-1$.  
The proof is an adaptation of the argument presented for the periodic KdV equation in 
\cite{CoKeSt}. Remark that we encounter difficulty such that the Kawahara equation 
has less symmetries than the KdV equation. 
Before modified energies are introduced, we prepare some notations. 
A $l$ multiplier is a function $M; \mathbb{R}^l \rightarrow \mathbb{C}$.  
We say a $l$ multiplier $M$ is symmetric if 
$M(k_1, k_2, \cdots, k_l)=M(k_{\sigma(1)} , k_{\sigma(2) }, \cdots, k_{\sigma(l)} )$ for 
all $\sigma \in S_l$. 
The symmetrization of a $l$ multiplier $M$ is defined by 
\begin{align*}
[M]_{sym} (k_1, k_2, \cdots, k_l) := \frac{1}{ l! } \sum_{\sigma \in S_l}
M (k_{\sigma(1)}, k_{\sigma(2)}, \cdots, k_{\sigma(l)} ).
\end{align*}
We define a $l$-linear functional associated to the function $M$ acting on $l$ functions $u_1,u_2, \cdots, u_l$, 
\begin{align*}
\Lambda_l (M; u_1, u_2, \cdots, u_l):= \int_{k_1+k_2+\cdots +k_l =0 } M(k_1,k_2, \cdots, k_l) 
\prod_{i=1}^{l} \widehat{u}_i(k_i). 
\end{align*}  
$ \Lambda_l (M; u, \cdots,u  )$ is simply written as $\Lambda_l (M)$. 
We recall the original modified energy $E_I^{(2)} (u) (t) = \| I u (t) \|_{L^2}^2$. 
We use this functional to obtain GWP for $-21/26< s <0$ but not 
 $-1 \leq s \leq -21/26$. Then we construct new modified energies by adding some correction terms to 
 $E_I^{(2)}(u)$, following the argument to \cite{CoKeSt}. 
Using $u$ is real valued and $m$ is even, we use the Plancherel theorem to have
\begin{align*}
E_I^{(2)} (u) (t)= \Lambda_2 ( m(k_1) m(k_2)) (t).
\end{align*} 
Here $a_l$, $b_l$ denote $a_l =i \sum_{j=1}^{l} k_j^5$ and $b_l= i \sum_{j=1}^{l} k_j^{3}$. 
We compute the time derivative of the modified energy $E_I^{(2)} (u)$ to have 
\begin{align*}
\frac{d}{dt} E_{I}^{(2)} (u) (t)= & \Lambda_{2} ((a_2+ \lambda^{-2} \beta b_2) m(\xi_1) m(\xi_2) ) (t) \\
- & 2i \Lambda_3 ([ ( k_2+ k_3) m(k_1) m(k_2+ k_3)  ]_{sym}) (t) . 
\end{align*}
Here the first term vanishes because $a_2=0$ and $b_2=0$. Therefore the time derivative of $E_I^{(2)}(u)$ has the cubic form as follows; 
\begin{align*}
\frac{d}{ dt} E_I^{(2)} (u) (t)= \Lambda_3 (M_3) (t), \hspace{0.3cm}
M_3(k_1, k_2,k_3) = -2i [ m(k_1) m(k_{23}) k_{23}]_{sym}, 
\end{align*}
where $k_{ij}= k_i+ k_j$ for $i \neq j$. 
We add a correction term $\Lambda_3( \sigma_3)$ to the modified energy $E_I^{(2)} (u)$ to construct a new modified energy $E_I^{(3)} (u)$. 
Namely, 
\begin{align*} 
E_I^{(3)} (u) (t) =E_I^{(2)} (u)(t)+\Lambda_3 (\sigma_3) (t),
\end{align*} 
where the symmetric function $\sigma_3$ is determined later. Similarly, the time derivative of $E_I^{(3)} (u)$ is expressed by
\begin{align*}
\frac{d}{dt} E_I^{(3)} (u) (t) = & \Lambda_3( M_3  ) (t) +\Lambda_3 ( (a_3+ \lambda^{-2} \beta b_3    ) \sigma_3) (t) \\
 - & 3i \Lambda_4 ([ \sigma_3 (k_1, k_2, k_{34}) k_{34} ]_{sym} ) (t).
\end{align*}
Here we choose $\sigma_3= - M_3 /( a_3+ \lambda^{-2} \beta b_3 )$ to cancel the cubic terms. 
Then, 
\begin{align*}
\frac{d}{ d t} E_I^{(3)} (u)(t)=\Lambda_4 (M_4) (t), \hspace{0.3cm}
M_4 (k_1,k_2,k_3,k_4)
:=-  3i \Lambda_4 ([ \sigma_3 (k_1, k_2, k_{34}) k_{34} ]_{sym} ).
\end{align*}
In the same manner, we define the third modified energy as follows;
\begin{align*}
E_I^{(4)} (u) (t):= E_I^{(3)} (u)(t)+  \Lambda_4(\sigma_4) (t), \hspace{0.3em}
\sigma_4:=- M_4/(a_4+\lambda^{-2} \beta b_4). 
\end{align*} 
Then we have
\begin{align*}
\frac{d}{ d t} E_I^{(4)} (u) (t):= & \Lambda_5 (M_5) (t), \\
M_5 (k_1, k_2, k_3,k_4, k_5):= & -4 i [ \sigma_4 (k_1,k_2,k_3, k_{45}) k_{45} ]_{sym}.
\end{align*}
Chen and Guo \cite{CG} obtained the upper bound of $M_4$ as follows.  
\begin{lem} \label{lem_M_4} 
Let $|k_1| \geq |k_2| \geq |k_3| \geq |k_4|$. Then we have 
\begin{align} \label{M_4}
|M_4 (k_1, k_2, k_3, k_4)| \lesssim \frac{ |a_4+ \beta \lambda^{-2} b_4| m( k_4^{*}) }
{ (N+|k_1|)^2 ( N+|k_2|)^2  (N+|k_3|)^3 (N+|k_4|)  }  
\end{align}
where $k_4^{*}:=\min \{ |k_l|, |k_{ij}| \} $.
\end{lem}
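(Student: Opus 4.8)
The plan is to prove Lemma~\ref{lem_M_4} by unwinding the recursive definitions of $M_3$, $\sigma_3$ and $M_4$, and then carrying out a careful case analysis according to the size of the resonance function $a_4 + \beta\lambda^{-2} b_4$ relative to the frequencies. First I would record the explicit formula
\begin{align*}
M_4(k_1,k_2,k_3,k_4) = -3i\bigl[\sigma_3(k_1,k_2,k_{34})\,k_{34}\bigr]_{sym},
\qquad
\sigma_3(k_1,k_2,k_3) = -\frac{M_3(k_1,k_2,k_3)}{a_3 + \beta\lambda^{-2} b_3},
\end{align*}
with $M_3(k_1,k_2,k_3) = -2i[m(k_1)m(k_{23})k_{23}]_{sym}$, and observe that on the hyperplane $k_1+k_2+k_3=0$ one has $a_3 + \beta\lambda^{-2}b_3 = i(k_1^5+k_2^5+k_3^5) + i\beta\lambda^{-2}(k_1^3+k_2^3+k_3^3)$, which factors (using $k_1+k_2+k_3=0$) as a constant times $k_1k_2k_3\{k_1^2+k_2^2+k_3^2 + \tfrac{6}{5}\beta\lambda^{-2}\}$ — the same algebraic identity already used in \eqref{alg}. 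This makes $\sigma_3$ completely explicit as a ratio of $m$-factors over a polynomial, and after symmetrizing over the four variables $M_4$ becomes a sum of six such terms with $k_{34}$ (and its permutations) playing the role of the "collapsed" frequency.

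Next I would exploit the key arithmetic fact that on $k_1+k_2+k_3+k_4=0$ the full resonance $a_4 + \beta\lambda^{-2}b_4$ also factors favourably: combining the $k_1+\dots+k_4=0$ constraint with the cubic identity applied to the triples appearing in each symmetrized term, one gets that $a_4+\beta\lambda^{-2}b_4$ is divisible (as a polynomial, up to $O(\lambda^{-2})$ corrections) by the relevant products $k_ik_jk_{ij}$. The point of the claimed bound is precisely that the numerator $|a_4+\beta\lambda^{-2}b_4|$ in \eqref{M_4} is exactly what is produced by clearing the $\sigma_3$-denominators, so that the inequality to prove reduces to a polynomial comparison: after multiplying through by $(N+|k_1|)^2(N+|k_2|)^2(N+|k_3|)^3(N+|k_4|)$ and by the denominator of $M_4$, one must check that a certain explicit polynomial expression in the $k_i$ is dominated by $|a_4+\beta\lambda^{-2}b_4|\,m(k_4^*)$ times the weight. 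I would reduce to the region $|k_1|\geq|k_2|\geq|k_3|\geq|k_4|$ as in the statement, and split into the two regimes $N \gtrsim |k_1|$ (where all $m$-factors are $\sim 1$, all weights are $\sim N$, and the estimate is an easy polynomial count) and $|k_1| \gg N$ (the genuinely hard case).

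In the regime $|k_1| \gg N$ the main obstacle — and where I expect most of the work to concentrate — is bookkeeping the $m$-factors together with the cancellation in the denominators. Here one uses the monotonicity and the explicit form of $m$, the mean value inequalities $m(k_i)m(k_j) \lesssim m(k_{ij})$ when $|k_i|\sim|k_j|$ and $m(k_{ij}) \lesssim m(\min)$-type bounds, and a subdivision according to which frequencies are comparable (e.g. $|k_1|\sim|k_2|$ with $|k_{12}|$ small versus $|k_1|\sim|k_2|\sim|k_3|$, etc.), exactly the kind of frequency-interaction case analysis familiar from the $X^{s,b}$ bilinear estimate in Section 3. In each case the factored form of $a_4+\beta\lambda^{-2}b_4$ supplies the large power $(N+|k_1|)^2(N+|k_2|)^2(N+|k_3|)^3$-worth of decay (the $5$th-order dispersion is what makes these exponents so generous), while the one remaining factor $k_{34}$ from $\sigma_3\cdot k_{34}$ is absorbed by the single power $(N+|k_4|)$ in the denominator after identifying $k_4^*$ with the appropriate smallest collapsed frequency. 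The careful part is checking that the $\lambda^{-2}\beta$ correction terms never destroy the factorization (they are lower order and harmless since $\lambda\geq 1$), and that the symmetrization does not mix a resonant configuration with a non-resonant one in a way that loses the bound; I would handle this by noting that each of the six symmetrized summands is estimated separately and the worst one dictates \eqref{M_4}. Since Chen and Guo \cite{CG} carried out the analogous computation in the non-periodic setting, I would largely follow their argument, checking that nothing in it used the integrability of the frequency variable rather than just the pointwise algebra.
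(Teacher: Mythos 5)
First, a remark on what you are being compared against: the paper itself contains \emph{no} proof of Lemma~\ref{lem_M_4} --- it is quoted from Chen and Guo \cite{CG} (``Chen and Guo \cite{CG} obtained the upper bound of $M_4$ as follows''), and the only comment offered is that establishing it is difficult because the Kawahara equation has fewer symmetries than KdV. So your closing plan to ``largely follow \cite{CG}'' while checking that nothing uses the continuum structure of the frequency variable is consistent with what the paper actually does, and your preliminary reductions are sound: the explicit formula for $M_4$ in terms of $\sigma_3$, the factorization of $a_3+\beta\lambda^{-2}b_3$ on $k_1+k_2+k_3=0$ as a constant times $k_1k_2k_3\{k_1^2+k_2^2+k_3^2+\tfrac{6}{5}\beta\lambda^{-2}\}$, and the analogous factorization of $a_4+\beta\lambda^{-2}b_4$ on $k_1+\cdots+k_4=0$ as $k_{12}k_{13}k_{14}$ times a non-degenerate quadratic are all correct starting points, and the split into $|k_1|\lesssim N$ versus $|k_1|\gg N$ is the right first reduction.

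There is, however, a genuine gap at the heart of the sketch. You assert that the factor $|a_4+\beta\lambda^{-2}b_4|$ in the numerator of \eqref{M_4} ``is exactly what is produced by clearing the $\sigma_3$-denominators,'' and later that ``each of the six symmetrized summands is estimated separately and the worst one dictates \eqref{M_4}.'' Neither can work. Clearing the denominator of $\sigma_3(k_1,k_2,k_{34})$ produces the \emph{three}-variable resonance $k_1k_2k_{34}(k_1^2+k_2^2+k_{34}^2+\tfrac{6}{5}\beta\lambda^{-2})$, not the four-variable one, which is proportional to $k_{12}k_{13}k_{14}$. The bound \eqref{M_4} forces $M_4$ to vanish on the resonant set $\{k_{12}k_{13}k_{14}=0\}$, but an individual summand of $[\sigma_3(k_1,k_2,k_{34})k_{34}]_{sym}$ does not vanish there: for instance, when $k_{13}=0$ while $k_{12},k_{14}\neq0$, the summand obtained by collapsing the pair $\{3,4\}$ is generically nonzero. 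Hence no term-by-term estimate can produce the resonance factor in the numerator; one must put the symmetrized terms over a common denominator and exhibit the cancellation (via mean-value-theorem manipulations of differences such as $m^2(k_i)k_i-m^2(k_j)k_j$), which is precisely the step the paper flags as hard because the fifth-order symbol lacks the clean algebraic identity available for KdV in \cite{CoKeSt}. Without that cancellation you obtain only a weaker pointwise bound, which is not enough to make $\sigma_4=-M_4/(a_4+\lambda^{-2}\beta b_4)$ bounded, and Propositions~\ref{prop_ACL} and \ref{prop_FTD} would then fail downstream.
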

To establish this upper bound for the Kawahara equation is difficult 
because this equation has less symmetries than the KdV equation. 
Combining the bilinear Strichartz estimate (\ref{es_L_4_1}) and this upper bound (\ref{M_4}), 
we establish the following almost conservation law which 
controls the increment of the modified energy $E_I^{(4)} (u)$ in time. 
\begin{prop} \label{prop_ACL}
Let $ 0>s \geq -1$ and $N \gg 1$. Then there exists $C_1>0$ such that 
\begin{align} \label{ACL1}
\bigl| E_I^{(4)}(u)(t)-E_I^{(4)}(u)(t_0)  \bigr| \leq C_1 N^{5s}
\| I u (t_0) \|_{W^{0} ([t_0-1,t_0+1])}^5,
\end{align}
for any $t_0 \in \mathbb{R}$ and $t \in [t_0-1,t_0+1]$. 
\end{prop}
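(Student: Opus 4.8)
The plan is to integrate the cubic-through-quintic hierarchy already set up and reduce (\ref{ACL1}) to a single quintilinear spacetime estimate. By the fundamental theorem of calculus and the identity $\frac{d}{dt}E_I^{(4)}(u)(t)=\Lambda_5(M_5)(t)$, for $t\in[t_0-1,t_0+1]$ one has
\begin{align*}
E_I^{(4)}(u)(t)-E_I^{(4)}(u)(t_0)=\int_{t_0}^{t}\Lambda_5(M_5;u,\ldots,u)(t')\,dt'.
\end{align*}
First I would replace $Iu$, restricted to $[t_0-1,t_0+1]$, by a global extension $w$ with $\|w\|_{W^0}\lesssim\|Iu\|_{W^0([t_0-1,t_0+1])}$, so that $u=I^{-1}w$ on that interval, and rewrite the integrand by $\widehat u(k)=\widehat w(k)/m(k)$. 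Since the time integration runs over a subinterval of $[t_0-1,t_0+1]$, it then suffices to prove, with a constant independent of $\lambda$,
\begin{align*}
\bigl\|\Lambda_5(\widetilde M_5;w,\ldots,w)(t')\bigr\|_{L_{t'}^1(\R)}\lesssim N^{5s}\|w\|_{W^0}^5,\qquad\widetilde M_5:=\frac{M_5}{m(k_1)\cdots m(k_5)}.
\end{align*}

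Next I would obtain a pointwise bound on $\widetilde M_5$. From $\sigma_4=-M_4/(a_4+\lambda^{-2}\beta b_4)$ and Lemma~\ref{lem_M_4}, for $|k_1|\ge|k_2|\ge|k_3|\ge|k_4|$,
\begin{align*}
|\sigma_4(k_1,k_2,k_3,k_4)|\lesssim\frac{m(k_4^*)}{(N+|k_1|)^2(N+|k_2|)^2(N+|k_3|)^3(N+|k_4|)},
\end{align*}
so that $M_5=-4i[\sigma_4(k_1,k_2,k_3,k_{45})k_{45}]_{sym}$ together with the symmetrization yields a pointwise bound on $M_5$ in terms of the ordered moduli $N_1^*\ge\cdots\ge N_5^*$ of $k_1,\ldots,k_5$. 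Dividing by $m(k_1)\cdots m(k_5)$ and using that $m$ is even, monotone, $\equiv1$ below $N$, with $1/m(k)=(|k|/N)^{-s}$ above $2N$, one tracks the resulting powers of $N$; the gain $m(k_4^*)$ and the strong frequency decay in Lemma~\ref{lem_M_4} (precisely what forces the use of $E_I^{(4)}$ rather than $E_I^{(2)}$ or $E_I^{(3)}$) compensate the $1/m$ losses and leave the factor $N^{5s}$. After dyadic decomposition in the $|k_i|$ and in the modulations $\langle\tau_i-p_\lambda(k_i)\rangle$, the bound on $\widetilde M_5$ becomes, on each block, essentially a constant times $N^{5s}$, which one expands in a Fourier series into a summable sum of product multipliers.

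Then I would carry out the multilinear estimate in the same spirit as the proof of (\ref{BE_T2}) in Section 3, using H\"older's and Young's inequalities in the frequency and modulation variables together with the bilinear Strichartz estimate (\ref{es_L_4_1}) of Lemma~\ref{lem_L_4}. On each dyadic block, after relabelling so that $|k_1|\ge\cdots\ge|k_5|$, the factorized bound on $\widetilde M_5$ reduces matters to controlling $\|(\mu_1(D)w)\cdots(\mu_5(D)w)\|_{L_{t,x}^1}$, which I would do by a case-by-case distribution of the five factors into two bilinear Strichartz pairs --- each estimated by (\ref{es_L_4_1}) with $b=b'=1/2$, noting $\|v\|_{X^{0,1/2}}\le\|v\|_{W^0}$ and, crucially for the periodic case, that (\ref{es_L_4_1}) recovers no powers of frequency --- and one remaining factor handled in $L_{t,x}^\infty$ via $\|v\|_{Y^0}\le\|v\|_{W^0}$, $Y^0\hookrightarrow C(\R;L^2(\mathbb{T}_\lambda))$ and Bernstein's inequality (or, when the corresponding variable is too large, by placing that factor into a Strichartz pair instead), the $\langle k\rangle^{1/2}$ loss being absorbed by the decay carried by the $\widetilde M_5$ bound. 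Summing the dyadic series then produces (\ref{ACL1}).

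The hard part is of two kinds. First, one must be certain that the resonant configurations, where $a_4+\lambda^{-2}\beta b_4$ is small, are genuinely harmless: this is exactly what the shape of Lemma~\ref{lem_M_4} guarantees, the factor $|a_4+\beta\lambda^{-2}b_4|$ in its numerator having to cancel the denominator of $\sigma_4$ uniformly, and establishing this (hence Lemma~\ref{lem_M_4} itself) is the delicate analytic input --- much harder than for KdV, because the Kawahara resonance function $kk_1(k-k_1)\bigl\{k^2+k_1^2+(k-k_1)^2+\tfrac{6}{5}\beta\lambda^{-2}\bigr\}$ carries fewer symmetries. Second, since (\ref{es_L_4_1}) gains no derivatives in the periodic setting, the dyadic frequency sums sit at the threshold of convergence; arranging the case analysis so that they close --- uniformly in $\lambda$, and down to exactly $s=-1$, including the regimes with a merged pair frequency far below the individual ones and the low frequencies near $1/\lambda$ to which (\ref{es_L_4_1}) does not directly apply --- is where the bulk of the work lies.
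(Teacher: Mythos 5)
Your overall architecture --- fundamental theorem of calculus, reduction to the quintilinear spacetime bound (\ref{ACL2}), the pointwise bound on $M_5/(m(k_1)\cdots m(k_5))$ via Lemma~\ref{lem_M_4}, dyadic decomposition, and a toolkit of H\"older/Young, the bilinear Strichartz estimate (\ref{es_L_4_1}) and one $L^\infty$ factor --- matches the paper's proof. But there is a genuine gap in the core multilinear step. Since $M_5$ vanishes unless $|k_1|\sim|k_2|\gtrsim N$ (ordering $|k_1|\ge\cdots\ge|k_5|$), the factors $1/m(k_1)m(k_2)$ contribute a growth $N^{2s}\langle k_1\rangle^{-s}\langle k_2\rangle^{-s}\sim N^{2s}N_1^{-2s}$ on the two highest frequencies, while the decay furnished by Lemma~\ref{lem_M_4} lives entirely on the remaining frequencies $k_3,k_4,k_5,k_{12}$ --- it contains no decay in $N_1$. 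Your scheme estimates the pair $(u_{N_1},u_{N_2})$ by (\ref{es_L_4_1}), which, as you yourself note, recovers no derivatives in the periodic setting; consequently nothing in your argument decays in $N_1$, and the dyadic sum over the highest frequency diverges for every $s<0$: at $s=-1$ the block weight in the first frequency configuration is essentially $N^{-5}N_1^{2}N_3^{-2}\langle N_4\rangle^{-2}$, and after summing in $N_3,N_4,N_5$ one is left with $\sum_{N_1\gtrsim N}N_1^{2}\|u_{N_1}\|_{W^0}\|u_{N_2}\|_{W^0}$, which is not controlled by $\|u\|_{W^0}^2$.

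The missing ingredient is the bilinear smoothing coming from the resonance function, which can only be accessed through the modulation weights: one must split the quintilinear form by Cauchy--Schwarz into a $2$-versus-$3$ pairing
\begin{equation*}
\bigl\||\p_x|(u_{N_1}u_{N_2})\bigr\|_{X^{s,-1/2}}\,\bigl\|u_{N_3}u_{N_4}u_{N_5}\bigr\|_{X^{-s,1/2}}
\end{equation*}
and invoke Hirayama's bilinear estimate (\ref{BE_G}), which uses $L_{\max}\gtrsim|k_{12}|N_1^{4}$ to gain a full factor $N_1^{-1}N_2^{-1}$; at $s=-1$ this exactly cancels $N_1^{-2s}$ and makes the high-frequency sum converge. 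The remaining trilinear factor (\ref{TR_G}) (resp.\ (\ref{TR_G2}) in the second frequency configuration) is then handled essentially as you describe, by cases on which modulation is dominant, using (\ref{es_L_4_1}) on a pair and the $Y^0$ embedding plus Bernstein on the last factor. Your ``two derivative-free Strichartz pairs plus one $L^\infty$ factor'' cannot substitute for (\ref{BE_G}); without it the estimate does not close at any negative $s$, let alone down to $s=-1$.
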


\begin{proof}
We may assume $t_0=0$ and $ \widehat{u} $ is non-negative. Since 
\begin{align*}
| E_I^{(4)} (u) (t) -E_I^{(4)} (u) (0) | \lesssim \int_{-1}^{1} \Lambda(M_5 ) (t) dt, 
\end{align*}
for any $t \in [-1,1]$, it suffices to show that
\begin{align} \label{ACL2}
\int_{-1}^1 \Lambda_5 \Bigl(\frac{M_5 (k_1,k_2,k_3,k_4,k_5)}{ m(k_1) m(k_2) m(k_3) m(k_4) m(k_5)} \Bigr)
(t) dt \lesssim N^{5s} \| u \|_{W^0([-1,1])}^5.
\end{align}
We suppose that $|k_1| \geq |k_2| \geq |k_3| \geq |k_4| \geq |k_5|$ without loss of generality. 
$M_5$ vanishes when $|k_i| \ll N$ for any $i=1,2,3,4,5$. So we can assume 
$|k_1| \sim |k_2| \gtrsim N$. From the definition of $M_5$, we have
\begin{align*}
|M_5 (k_1,k_2,k_3,k_4,k_5)| \lesssim |\sigma_4(k_3,k_4,k_5,k_{12}) k_{12}|. 
\end{align*}
From $k_3+k_4+k_5+k_{12}=0$, we only consider two cases as follows;
\begin{align*}
D_1:=& \bigl\{ (\vec{\tau}, \vec{k}) \in \mathbb{R}^5 \times \dot{\mathbb{Z}}_{\lambda}^5 ~; ~
|k_3| \sim |k_{12}| \gtrsim |k_4| \geq |k_5| \text{ and } |k_3 | \sim |k_{12}| \gtrsim N \bigr\}, \\
D_2:=& \bigl\{ (\vec{\tau}, \vec{k}) \in \mathbb{R}^5 \times \dot{\mathbb{Z}}_{\lambda}^5 ~; ~
|k_3| \sim |k_4| \gg  \max \{ |k_{12}|, k_5| \} \text{ and } |k_3 | \sim |k_4| \gtrsim N \bigr\}.
\end{align*}
where $\vec{\tau}:=(\tau_1,\tau_2, \cdots ,\tau_5)$ and 
$\vec{k}:= (k_1, k_2 , \cdots, k_5)$. 

\vspace{0.3em}

\noindent
(I) Firstly, we prove (\ref{ACL2}) in $D_1$.
From (\ref{M_4}), we easily obtain the upper bound of $M_5$ as  follows; 
\begin{align*}
| M_5(k_1,k_2, k_3, k_4, k_5)| \lesssim \frac{|k_{12}| }{ (N+|k_3|)^4 (N+|k_4|)^3 (N+|k_5|)^1}.
\end{align*}
From $|k_{12}| \sim |k_3| \gtrsim N$, we substitute this estimate into (\ref{ACL2}) and use the dyadic decompositions 
to have 
\begin{align*}
(\text{L. H. S. of (\ref{ACL2}} ) ) & \lesssim 
N^{5s} \int_{-1}^1 \Lambda_5 \bigl( |k_{12}| \langle k_1 \rangle^{-s} 
\langle k_2 \rangle^{-s} \langle k_3 \rangle^{-s-4} \langle k_4 \rangle^{-s-3} \langle k_5 \rangle^{-s-1} \bigr) (t) dt \\
& \lesssim N^{5s} \sum_{N_1} \sum_{N_2 \sim N_1} \sum_{N_3 \leq N_2} \sum_{N_4 \leq N_3} \sum_{N_5 \leq N_4} \\ 
& \hspace{0.3cm} \times N_1^{-s} N_2^{-s} N_3^{-s-4} \langle N_4 \rangle^{-s-3} \langle N_5 \rangle^{-s-1} 
\prod_{i=1}^5 \| u_{N_i} \|_{L_x^1 L_{t \in [-1,1]}^1 }
\end{align*}
where $u_{N_i} := P_{\{ |k_i| \sim N_i \} } u$ for dyadic numbers $N_i$ with $i=1,2,3,4,5$. 
From the Schwarz inequality, (\ref{ACL2}) is reduced to two estimates as follows;
\begin{align} \label{BE_G}
N_1^{-s} N_2^{-s} \| |\p_x| u_{N_1} u_{N_2}  \|_{X^{s,-1/2} } & \lesssim N_1^{-s-1} N_2^{-s-1} \| u_{N_1} \|_{W^0}
\| u_{N_2} \|_{W^0},  \\
\label{TR_G}
N_3^{-s-4}  \langle N_4 \rangle^{-s-3} \langle N_5 \rangle^{-s-1} \| \prod_{i=3}^5 u_{N_i} \|_{X^{-s,1/2}}  & 
\lesssim N_3^{-2s-2} \langle N_4 \rangle^{-s-2} \prod_{i=3}^5 \| u_{N_i} \|_{W^0}. 
\end{align}
If these estimates hold, the left hand side of (\ref{ACL2}) is bounded by 
\begin{align*}
& N^{5s} \sum_{N_1} \sum_{N_2 \sim N_1} \sum_{N_3 \leq N_2} \sum_{N_4 \leq N_3} \sum_{N_5 \leq N_4} 
 N_1^{-s-1} N_2^{-s-1} N_3^{-2s-2} \langle N_4 \rangle^{-s-2} \\
& \hspace{0.8cm} \times
\| u_{N_1} \|_{W^{0} ([-1,1])} \| u_{N_2} \|_{W^{0}([-1,1]) } \| u \|_{W^{0} ([-1,1]) }^3 \\
& \hspace{0.5cm} \lesssim  
N^{5s} \sum_{N_1} \sum_{N_2 \sim N_1} 
N_1^{-4s-4} \| u_{N_1} \|_{W^{0} ([-1,1])} \| u_{N_2} \|_{W^0 ([-1,1])} \| u \|_{W^0 ([-1,1])}^3, 
\end{align*}
which shows the desired estimate for $-1 \leq s <0$.

The bilinear estimate (\ref{BE_G}) has been already proven by Hirayama \cite{Hi}. 
So we only prove the trilinear estimate (\ref{TR_G}). From the Plancherel theorem, 
we have the identity, 
\begin{align*}
\| u_{N_3} u_{N_4} u_{N_5} \|_{X^{-s,1/2}} = \bigl\| \langle k \rangle^{-s} \langle \tau-p_{\lambda} (k) \rangle^{1/2} 
\prod_{i=3}^5 \widehat{u}_{N_i} (\tau_i, k_i) \bigr\|_{l_k^2 L_{\tau}^2 }, 
\end{align*} 
where $k=k_3+k_4+k_5$ and $\tau=\tau_3+ \tau_4 +\tau_5$. From the definition, $|k| \sim |k_{12}| \sim N_3$ in this case. 

\vspace{0.3em} 

\noindent
(Ia) We first consider the case  
$\langle \tau-p_{\lambda} (k) \rangle \lesssim \langle \tau_i-p_{\lambda} (k_i) \rangle$ for 
some $i=3,4,5$. By symmetry, we may assume  
$\langle \tau-p_{\lambda} (k) \rangle \lesssim \langle \tau_3 -p_{\lambda} (k_3) \rangle$. 
It suffices show that 
\begin{align} \label{red_1}
& N_3^{-2s-4}  \langle N_4 \rangle^{-s-3} \langle N_5 \rangle^{-s-1} \|  u_{N_3} u_{N_4} u_{N_5} \|_{L_{t,x}^2} \nonumber \\
& \hspace{0.5cm} \lesssim N_{3}^{-2s-4} \langle N_4 \rangle^{-s-2} \langle N_5 \rangle^{-s-1} \| u_{N_3} \|_{X^{0,0} } \| u_{N_4} \|_{Y^0} \| u_{N_5} \|_{Y^0}. 
\end{align}
H\"{o}lder's and Young's inequalities imply 
\begin{align*}
 \| \prod_{i=3}^{5} \widehat{u}_{N_i} (\tau_i, k_i) \|_{l_k^2 L_{\tau}^2} 
\lesssim & \| \widehat{u}_{N_3} \|_{l_k^2 L_{\tau}^2 } \| \widehat{u}_{N_4} \|_{l_k^1 L_{\tau}^1} 
\| \widehat{u}_{N_5} \|_{l_k^1 L_{\tau}^1} \\ 
\lesssim & 
 N_4^{1/2} N_5^{1/2} 
\| u_{N_3} \|_{X^{0,0}} \| u_{N_4} \|_{Y^0} \| u_{N_5} \|_{Y^0}. 
\end{align*}
We insert this into the left hand side of (\ref{red_1}) to obtain the required estimate. 

\vspace{0.3em}

\noindent
(Ib) Next, we consider the case 
$ \langle \tau-p_{\lambda} (k)  \rangle \gg \langle \tau_i-p_{\lambda}(k_i) \rangle $ 
for all $i=3,4,5$. 
In this case, we use the algebraic relation to have 
\begin{align} \label{res_es}
|\tau-p_{\lambda} (k)| \sim |-p_{\lambda} (k) + 
p_{\lambda}(k_1)+p_{\lambda} (k_2) +p_{\lambda} (k_3) | \lesssim |k_3|^4 |k_4| 
\end{align}
We use (\ref{res_es}) and the H\"{o}lder inequality to obtain
\begin{align*}
& N_3^{-2s-4} \langle N_4 \rangle^{-s-3} \langle N_5 \rangle^{-s-1} 
\| u_{N_3} u_{N_4} u_{N_5} \|_{X^{0,1/2}} \\
& \hspace{0.8cm}  \lesssim  
N_3^{-2s-2} \langle N_4 \rangle^{-s-5/2} \langle N_5 \rangle^{-s-1} 
\| u_{N_3} u_{N_4} u_{N_5} \|_{L_{t,x}^2} \\
& \hspace{0.8cm} \lesssim  N_3^{-2s-2} \langle N_4 \rangle^{-s-5/2} \langle N_5 \rangle^{-s-1}
\| u_{N_3} u_{N_4} \|_{L_{t,x}^2} \| u_{N_5} \|_{L_{t,x}^{\infty}} . 
\end{align*}
When $|k_{34}| \geq 1$, from (\ref{es_L_4_1}) and the Sobolev inequality, the right hand side is bounded by 
\begin{align*}
N_3^{-2s-2} \langle N_4 \rangle^{-s-5/2} \langle N_5 \rangle^{-s-1/2} \| u_{N_3} \|_{X^{0,3/8} } \| u_{N_4} \|_{X^{0,3/8}} \| u_{N_5} \|_{Y^0},
\end{align*}
which shows the required estimate. 
On the other hand, we deal with the case $|k_{34}| \leq 1 $. Combining the H\"{o}lder inequality and the Young inequality, we have 
\begin{align*}
\| u_{N_3} u_{N_4} \|_{L_{t,x}^2} \| u_{N_5} \|_{L_{t,x}^{\infty}} & \lesssim  
N_5^{1/2} \| \widehat{u}_{N_3} * \widehat{u}_{N_4} \|_{l_k^{\infty} L_{\tau}^2} \| u_{N_5} \|_{Y^0} \\
& \lesssim 
N_5^{1/2} \| u_{N_3} \|_{L_{t,x}^2} \| u_{N_4} \|_{Y^0} \| u_{N_5} \|_{Y^0}. 
\end{align*}
From this, we immediately obtain the desired estimate.  

\vspace{0.5em}

\noindent
(II) Secondly, we prove (\ref{ACL2}) in $D_2$. 
In this case, we have the upper bound of $M_5$ as follows;  
\begin{align*}
| M_5(k_1,k_2, k_3, k_4, k_5)| \lesssim \frac{|k_{12}| }{ (N+|k_3|)^2 (N+|k_4|)^2 (N+|k_{12}|)^2 (N+|k_5|)^2 }.
\end{align*}
In the same manner as above, (\ref{ACL2}) is reduced to (\ref{BE_G}) and 
\begin{align} \label{TR_G2}
N_3^{-s-2} N_4^{-s-2} \langle N_5 \rangle^{-s-2} \bigl\| \prod_{i=3}^5 \widehat{u}_{N_i} (\tau_i,k_i) 
\bigr\|_{X^{-s-2,1/2} } 
\lesssim  N_3^{-2s-2} \langle N_5 \rangle^{-s-3/2} \prod_{i=3}^5 \| u_{N_i} \|_{W^0}.
\end{align}
We now show the trilinear estimate (\ref{TR_G2}). 

\vspace{0.3em}

\noindent
(IIa) We first consider the case $|\tau-p_{\lambda} (k)| \lesssim |\tau_3-p_{\lambda} (k_3) | $. 
We use H\"{o}lder's inequality and Young's inequality to have 
\begin{align*}
& \| \langle k \rangle^{-s-2} \prod_{i=3}^5 \widehat{u}_{N_i} \|_{l_k^2 L_{\tau}^2} \lesssim 
\| \prod_{i=3}^5 \widehat{u}_{N_i} \|_{l_k^{\infty} L_{\tau}^2} \\
& \hspace{0.3cm} \lesssim \| \widehat{u}_{N_3} \|_{l_k^2 L_{\tau}^2} 
\| \widehat{u}_{N_4} \|_{l_k^{2} L_{\tau}^1} \| \widehat{u}_{N_5} \|_{l_k^1 L_{\tau}^1} 
\lesssim N_5^{1/2} \| u_{N_3} \|_{L_{t,x}^2 } \| u_{N_4} \|_{Y^0} \| u_{N_5} \|_{Y^0}, 
\end{align*}
which implies the desired estimate. 

\vspace{0.3em}

\noindent
(IIb) Next, we consider the case $|\tau-p_{\lambda} (k)| \gg |\tau_i-p_{\lambda} (k_i)| $ for all $i=3,4,5$. 
In this case, the algebraic relation implies 
\begin{align*}
|\tau-p_{\lambda} (k) | \lesssim \max \{ |k|, |k_5| \} |k_3|^4. 
\end{align*}
We use the H\"{o}lder inequality and the Young inequality to obtain 
\begin{align*}
\text{(L. H. S. of (\ref{TR_G2}))} & \lesssim 
N_3^{-2s-2} \langle N_5 \rangle^{-s-3/2} \| \prod_{i=3}^5 \widehat{u}_{N_i} \|_{l_k^{\infty} L_{\tau}^2} \\
& \lesssim N_{3}^{-2s-2} \langle N_{5} \rangle^{-s-3/2} \| u_{N_3} u_{N_4} \|_{L_{t,x}^2} \| u_{N_5} \|_{Y^0},
\end{align*}
which is an appropriate bound from the above argument. 
\end{proof}

Next, we estimate the difference between the almost conserved quantity $E_I^{(4)} (u)$ and the first modified energy $E_{I}^{(2)} (u)$ when 
the time is fixed. We call this estimate the fixed time difference.
\begin{prop} \label{prop_FTD}
Let $0> s \geq -1$ and $N \gg 1$. Then there exists $C_2>0$ such that  
\begin{align} \label{FTD}
| E_I^{(4)} (u)(t_0) -E_I^{(2)} (u) (t_0) | \leq C_2 (\| I u(t_0) \|_{L_x^2}^3+ \| I u(t_0) \|_{L_x^2}^4 ), 
\end{align}
for any $t_0 \in \mathbb{R}$.
\end{prop}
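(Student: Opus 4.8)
The plan is to peel off the modified energies one layer at a time and reduce the claim to pointwise upper bounds on the two correction multipliers. By the very construction of $E_I^{(3)}$ and $E_I^{(4)}$ one has
\begin{align*}
E_I^{(4)}(u)(t_0)-E_I^{(2)}(u)(t_0)=\Lambda_3(\sigma_3)(t_0)+\Lambda_4(\sigma_4)(t_0),
\end{align*}
so it suffices to prove $|\Lambda_3(\sigma_3)(t_0)|\lesssim\|Iu(t_0)\|_{L_x^2}^3$ and $|\Lambda_4(\sigma_4)(t_0)|\lesssim\|Iu(t_0)\|_{L_x^2}^4$. First I would set $f(k):=|\widehat{Iu}(t_0,k)|\ge0$, so that $|\widehat{u}(t_0,k)|=f(k)/m(k)$, and order the frequencies so that $|k_1|\ge|k_2|\ge\cdots$; on the diagonal $\sum_j k_j=0$ this forces $|k_1|\sim|k_2|$. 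The reduction then rests on the elementary facts $\|g\ast h\|_{l^2_k(\dot{\mathbb{Z}}_{\lambda})}\lesssim\|g\|_{l^1_k(\dot{\mathbb{Z}}_{\lambda})}\|h\|_{l^2_k(\dot{\mathbb{Z}}_{\lambda})}$ and $\|\langle k\rangle^{-1}\|_{l^2_k(\dot{\mathbb{Z}}_{\lambda})}\lesssim1$, both uniform in $\lambda\ge1$ thanks to the normalization of the discrete norms. Hence, once I can bound $\sigma_3/(\prod_i m(k_i))$ by $\langle k_1\rangle^{-1}\langle k_3\rangle^{-1}$ and $\sigma_4/(\prod_i m(k_i))$ by $\prod_i\langle k_i\rangle^{-1}$, then writing $g(k)=f(k)/\langle k\rangle$ and summing gives $|\Lambda_3(\sigma_3)|\lesssim\|g\|_{l^2}\|g\|_{l^1}\|f\|_{l^2}\lesssim\|f\|_{l^2}^3$ and $|\Lambda_4(\sigma_4)|\lesssim\|g\ast g\|_{l^2}^2\lesssim\|g\|_{l^1}^2\|g\|_{l^2}^2\lesssim\|f\|_{l^2}^4$, which is precisely (\ref{FTD}) with a constant independent of $N$ and $\lambda$.

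For the cubic multiplier $\sigma_3=-M_3/(a_3+\lambda^{-2}\beta b_3)$, I would first observe that on $k_1+k_2+k_3=0$ we have $M_3=\tfrac{2i}{3}\sum_j k_j\,m(k_j)^2$, which vanishes as soon as $|k_j|\le N$ for every $j$; hence $\sigma_3$ is supported where $|k_1|\sim|k_2|\gtrsim N$. Rewriting $\sum_j k_j m(k_j)^2=k_1[m(k_1)^2-m(k_2)^2]+k_3[m(k_3)^2-m(k_2)^2]$, using $||k_1|-|k_2||=|k_3|$ together with the mean value theorem and $|m'(\xi)|\lesssim m(\xi)/\langle\xi\rangle$ for $|\xi|\gtrsim N$, and finally $m(k_1)\le m(k_3)$, one obtains the cancellation $|M_3|\lesssim|k_3|\,m(k_3)^2$. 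On the other hand, on $\sum_j k_j=0$ with $k_j\ne0$ the cubic resonance factors as $a_3+\lambda^{-2}\beta b_3=i\,k_1k_2k_3\bigl(\tfrac52(k_1^2+k_2^2+k_3^2)+3\beta\lambda^{-2}\bigr)$, and since $|k_1|\sim|k_2|\gtrsim N\gg1$ and $\lambda\ge1$ the bracket is $\sim|k_1|^2$, so $|a_3+\lambda^{-2}\beta b_3|\sim|k_1|^4|k_3|$. Combining, $|\sigma_3|\lesssim m(k_3)^2|k_1|^{-4}$, and then, using $m(k_1)\sim m(k_2)$, $m(k_3)\le1$, $1/m(k)\lesssim(\langle k\rangle/N)^{-s}$ for $|k|\gtrsim N$, and $-s\le1$, I get $\sigma_3/(m(k_1)m(k_2)m(k_3))\lesssim|k_1|^{-4}m(k_1)^{-2}\lesssim\langle k_1\rangle^{-2}\lesssim\langle k_1\rangle^{-1}\langle k_3\rangle^{-1}$, the bound promised above.

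For the quartic multiplier $\sigma_4=-M_4/(a_4+\lambda^{-2}\beta b_4)$, most of the work is already in Lemma~\ref{lem_M_4}: dividing the bound there by $|a_4+\beta\lambda^{-2}b_4|$ yields
\begin{align*}
|\sigma_4(k_1,k_2,k_3,k_4)|\lesssim\frac{m(k_4^{*})}{(N+|k_1|)^2(N+|k_2|)^2(N+|k_3|)^3(N+|k_4|)}.
\end{align*}
Dividing by $\prod_i m(k_i)$, using $m(k_4^{*})\le1$ and $\prod_i m(k_i)^{-1}\lesssim N^{-4|s|}\prod_i(N+|k_i|)^{|s|}\le N^{-4|s|}\prod_i(N+|k_i|)$ (valid since $|s|\le1$ and $N\ge1$), the numerator partially cancels the denominator and leaves $\sigma_4/(\prod_i m(k_i))\lesssim N^{-4|s|}\bigl[(N+|k_1|)(N+|k_2|)(N+|k_3|)^2\bigr]^{-1}$; since $N\ge1$ and $|k_3|\ge|k_4|$ this is $\lesssim\prod_i\langle k_i\rangle^{-1}$, as needed. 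Then $|\Lambda_4(\sigma_4)(t_0)|\lesssim\sum_{k_1+\cdots+k_4=0}\prod_i g(k_i)=\|g\ast g\|_{l^2}^2\lesssim\|f\|_{l^2}^4$, and adding the two contributions proves (\ref{FTD}).

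The one genuinely delicate point is the control of the cubic multiplier: one must simultaneously extract the lower bound $|a_3+\lambda^{-2}\beta b_3|\gtrsim|k_1|^4|k_3|$ on the support of $M_3$ — which is where the hypotheses $N\gg1$ and $\lambda\ge1$ are used to absorb the dispersive correction $\beta\lambda^{-2}k^3$ — and the cancellation $|M_3|\lesssim|k_3|\,m(k_3)^2$ coming from $\sum_j k_j=0$ and the smoothness of $m$. (The analogue for KdV is the identity $k_1^3+k_2^3+k_3^3=3k_1k_2k_3$; it is the extra fifth-order term that forces the factorization above and reflects the fewer symmetries of the Kawahara equation.) Everything else — the quartic bound via Lemma~\ref{lem_M_4} and the final convolution estimates over $\dot{\mathbb{Z}}_{\lambda}$ — is routine, and the resulting constant $C_2$ is independent of $N$ and $\lambda$.
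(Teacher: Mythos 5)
Your proposal is correct and follows essentially the same route as the paper: the same reduction to $\Lambda_3(\sigma_3)$ and $\Lambda_4(\sigma_4)$, the same mean-value-theorem cancellation $|M_3|\lesssim |k_3|\,m(k_3)^2$ together with the resonance lower bound $|a_3+\lambda^{-2}\beta b_3|\sim |k_1|^4|k_3|$, and Lemma~\ref{lem_M_4} for the quartic term. The only (cosmetic) difference is that you close the argument on the Fourier side with Young's convolution inequality and $\|\langle k\rangle^{-1}\|_{l^2_k(\dot{\mathbb{Z}}_\lambda)}\lesssim 1$, whereas the paper returns to physical space and uses H\"older together with the Sobolev embedding into $L^4$; these are equivalent in substance.
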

 
\begin{proof}
From the definition of the modified energies, it suffices to show that 
\begin{align*}
| \Lambda_3 (\sigma_3) (t_0) | \lesssim \| I u(t_0) \|_{L^2}^3, \hspace{0.3cm}
|\Lambda_4 (\sigma_4) (t_0) | \lesssim \| I u(t_0) \|_{L^2}^4. 
\end{align*}
These estimates are reduced to the following estimates. 
\begin{align} \label{FTD_C}
\Bigl| \Lambda_3 \Bigl( \frac{M_3 (k_1,k_2,k_3) }{ (a_3+ \beta \lambda^{-2} b_3) m(k_1) m(k_2) m(k_3) } (t_0)  \Bigr)
\Bigr| & \lesssim \| u (t_0) \|_{L^2}^3, \\
\label{FTD_Q}
\Bigl| \Lambda_4 \Bigl( \frac{M_4 (k_1,k_2,k_3,k_4) }{ (a_4+ \beta \lambda^{-2} b_4) m(k_1) m(k_2) m(k_3) m(k_4) } (t_0)  \Bigr)
\Bigr| & \lesssim \| u (t_0) \|_{L^2}^4. 
\end{align}
Firstly, we prove (\ref{FTD_C}) when $|k_1| \geq |k_2| \geq |k_3|$. Following the mean value theorem, we easily obtain 
the upper bound of $M_3$ as follows; 
\begin{align*}
| M_3 (k_1, k_2,k_3)| \lesssim |k_3| m(k_3)^2.
\end{align*}
If $|k_i| \ll N$ for all $i=1,2,3$, then $M_3$ vanishes. So we only consider the case $|k_1| \sim |k_2| \gtrsim N$. 
The algebraic relation shows
\begin{align*}
|a_3+ \beta \lambda^{-2} b_3| \sim |k_1|^4 |k_3|
\end{align*}
Following these, we use the H\"{o}lder inequality and the Sobolev inequality to have 
\begin{align*}
& (\text{L. H. S. of (\ref{FTD_C})})  \lesssim N^{2s} \int  |\langle \p_x \rangle^{-2-s} u(t_0) |^2 | I u(t_0) | dx \\
& \hspace{0.3cm} \lesssim N^{2s} \| \langle k \rangle^{-2-s} u(t_0) \|_{L^4}^2 \| I u(t_0) \|_{L^2} 
\lesssim N^{2s} \| u (t_0) \|_{L^2}^2 \| m \widehat{u} (t_0) \|_{L_{\xi}^2},   
\end{align*}
which is bounded by $N^{2s} \| u (t_0) \|_{L^2}^3$ from the definition of $m$.

Secondly, we prove (\ref{FTD_Q}) when $|k_1| \geq |k_2| \geq |k_3| \geq |k_4|$. 
From (\ref{M_4}) and Sobolev's inequality, the left hand side of (\ref{FTD_Q}) is bounded by 
\begin{align*}
\Bigl| \Lambda_4 \Bigl( \frac{1}{ \prod_{i=1}^4 (N+|k_i|)^{2} m(k_i)  } \Bigr) (t_0) \Bigr|
& \lesssim N^{4s} \int | \langle \p_x \rangle^{-s-2} u (t_0) |^4 dx \\
& \lesssim  N^{4s} \| \langle \p_x \rangle^{-s-2} u(t_0) \|_{L^4}^4 \lesssim N^{4s} \| u (t_0) \|_{L^2}^4. 
\end{align*}
\end{proof}

Propositions~\ref{prop_ACL} and \ref{prop_FTD} imply that we can find a constant $C_3>0$ such that 
\begin{align} \label{es_ACL3}
\sup_{-N^{-5s} \leq t \leq N^{-5s}} \| I u (t) \|_{L^2} \leq C_3 \| I u(0) \|_{L^2}
\end{align}
For the details of the proof, see \cite{CoKeSt}.  
A direct calculation shows that 
\begin{align} \label{es_ACL4}
\| I u_{\lambda} (0, \cdot) \|_{L^2} \leq C_0 \lambda^{-s-7/2} N^{-s} \| u_0 \|_{ \dot{H}^s}
\end{align}
for some constant $C_0>0$. 
Here we take $\lambda \geq 1$ satisfying the following condition. 
\begin{align*}
\lambda^{-s-7/2} N^{-s} =\varepsilon_0 \ll 1.
\end{align*}
Then we combine (\ref{es_ACL3}) and (\ref{es_ACL4}) to have 
\begin{align*}
\sup_{-T \leq t \leq T} \| u(t) \|_{\dot{H}^s} \leq & \lambda^{7/2} \sup_{-\lambda^5 T \leq t \leq \lambda^5 T}
\| I u_{\lambda} (t) \|_{L^2} \\
\leq & C_3 \lambda^{7/2} \| I u_{\lambda} (0) \|_{L^2} 
\leq \varepsilon_0 C_1 C_3 \lambda^{-s} N^{-s} \| u_0 \|_{\dot{H}^s},
\end{align*}
when $\lambda^{5} T \leq N^{-5s}$. 
Therefore we have the following upper bound of the growth order of $\dot{H}^s$, 
\begin{align*}
\sup_{-T \leq t \leq T} \| u(t) \|_{\dot{H}^s} \leq C T^{7 /5 (2 s+5)} \| u_0 \|_{\dot{H}^s},
\end{align*}
for $-1 \leq s <0$.

\section{Proof of the ill-posedness}

In this section, we give the proof Theorem~\ref{ill_TK} which is based on \cite{Bo97}.  
From the argument to \cite{Ho}, it suffices to show that 
we seek for the initial data such that, for $|t|$ bounded, 
\begin{align} \label{es_ill}
\| A_3 (u_0) (t) \|_{\dot{H}^s} \lesssim \| u_0 \|_{ \dot{H}^s}^3,
\end{align}  
fails when $s<-3/2$. Here $A_3(u_0)$ is the cubic term of the Taylor expansion of the flow map as follows;
\begin{align} \label{def_cub}
A_3(u_0)(t) =2 \int_0^{t} U(t-s) \p_x(u_1(s) A_2 (u_0) (s)) ds,
\end{align}
where $u_1(t)=U(t) u_0$ and 
\begin{align*}
A_2(u_0) (t)= \int_0^t U(t-s) \p_x(u_1(s)^2 ) ds,
\end{align*}
which is the quadratic term of the Taylor expansion of the flow map. 
We put a sequence of initial data $\{ \phi_N \}_{N=1}^{\infty} \in H^{\infty}$ as follows;
\begin{align} \label{def_ini}
\widehat{\phi}_N(k)= N^{-s} (\chi_N (k) + \chi_{-N} (k) ).
\end{align}
Clearly $\| \phi \|_{\dot{H}^s} \sim 1$. 
A simple computation shows that 
\begin{align*}
\widehat{A}_2 (u_0) (t)= \sum_{k_1 \neq 0 ,k \neq k_1} 
k~\frac{ e^{i p(k) t} -e^{ i p(k_1)t+i p(k-k_1) t} }{ q_0(k_1,k-k_1) } \widehat{u}_0 (k_1) \widehat{u}_0(k-k_1),
\end{align*}
where 
\begin{align*}
q_0(k_1,k-k_1) := \frac{5}{2} k k_1 (k-k_1) \bigl\{ k^2+ k_1^2+(k-k_1)^2+\frac{6}{5} \beta  \bigr\}. 
\end{align*}
Substituting this into (\ref{def_cub}), we use the Fourier inversion formula to have
\begin{align} \label{cub_2}
A_3(u_0)(t)=& 2 \sum_{k_1 \neq 0} \sum_{k_2 \neq 0} \sum_{k_3 \neq 0} 
e^{i (k_1+k_2+k_3) x+ i p(k_1+k_2+k_3) t} \Bigl( -\frac{1-e^{-i q_1 t}}{q_1} +\frac{1-e^{-i q_2 t} }{q_2}
\Bigr)  \nonumber \\
& \hspace{0.5em} \times \frac{ (k_1+k_2+k_3) (k_2+ k_3) }{q_0(k_2,k_3) } 
\widehat{u}_0 (k_1) \widehat{u}_0 (k_2 ) \widehat{u}_0 (k_3),
\end{align}
where 
\begin{align*}
q_1:= & \frac{5}{2} (k_1+k_2) (k_1+k_3) (k_2+k_3) \bigl\{ (k_1+k_2)^2+ (k_1+k_3)^2+(k_2+k_3)^2+\frac{6}{5} \beta  \bigr\},\\
q_2:= & \frac{5}{2} k_1 (k_2+k_3) (k_1+k_2+k_3) \bigl\{ k_1^2+ (k_2+k_3)^2+(k_1+k_2+k_3)^2+\frac{6}{5} \beta  \bigr\}.
\end{align*}
Note that $q_2$ does not vanish but $q_1 $ vanishes when $k_1=-N$ and $k_2=k_3=N$.   
In this case, inserting (\ref{def_ini}) into (\ref{cub_2}), we obtain 
\begin{align*}
|\widehat{A}_3 (\phi_N) (t)| \gtrsim C_1 |t| N^{-3s-4} |k| \chi_{N}(k) -C_2 N^{-3s-8} \chi_{N}(k) 
\end{align*}
for some constants $C_1>0$ and $C_2 \geq 0$. So there exists $C_3>0$ such that 
\begin{align*}
\| A_3 (\phi_N) (t) \|_{\dot{H}^s} \gtrsim C_3 N^{-2s-3}
\end{align*} 
for $|t|$ bounded. From $\| \phi_N \|_{\dot{H}^s} \sim 1$, (\ref{es_ill}) fails for $s <-3/2$.

\end{document}